\newtheorem{theorem}{Theorem}[section]
\newtheorem{proposition}[theorem]{Proposition}
\newtheorem{corollary}[theorem]{Corollary}
\newtheorem{lemma}[theorem]{Lemma}
\newtheorem{question}[theorem]{Question}
\theoremstyle{definition}
\newcommand\R{\mathbb R}
\newcommand\N{\mathbb N}
\newcommand\Q{\mathbb Q}
\newcommand\C{\mathbb C}
\newcommand{\abs}[1]{\lvert #1\rvert}
\DeclareMathOperator{\tint}{int}
\begin{document}
\title{Regularity points and Jensen measures for $R(X)$}

\author{J. F. Feinstein}
\address{School of Mathematical Sciences, The University of Nottingham, University Park, Nottingham, NG7 2RD, UK}
\email{joel.feinstein@nottingham.ac.uk}

\author{H. Yang}
\address{School of Mathematical Sciences, The University of Nottingham, University Park, Nottingham, NG7 2RD, UK}
\email{pmxhy1@nottingham.ac.uk}
\thanks{The second author is supported by a China Tuition Fee Research Scholarship and the School of Mathematical Sciences at the University of Nottingham}

\maketitle

\subjclass[2010]{Primary 46J10}


\begin{abstract}
We discuss two types of \lq regularity point', points of continuity and R-points for Banach function algebras, which were introduced by the first author and Somerset in \cite{FeinsteinSomerset2000}.  We show that, even for the natural uniform algebras $R(X)$ (for compact plane sets X), these two types of regularity point can be different. We then give a new method for constructing Swiss cheese sets $X$ such that $R(X)$ is not regular, but such that $R(X)$ has no non-trivial Jensen measures. The original construction appears in the first author's previous work. Our new approach to constructing such sets is more general, and allows us to obtain additional properties. In particular, we use our construction to give an example of such a Swiss cheese set $X$ with the property that the set of points of discontinuity for $R(X)$ has positive area.
\end{abstract}

\section{Introduction}

In \cite{FeinsteinSomerset2000}, the first  author and Somerset studied the failure of regularity for Banach function algebras in terms of two types of \lq regularity point': points of continuity and R-points. (These technical terms and others are defined in the next section.) They gave examples of Banach function algebras where points of continuity do not coincide with R-points, but these algebras are not natural. There do not appear to be any examples in the literature of natural Banach function algebras where the two types of regularity point are different. In Section 3 we give examples to show that they can differ even for natural uniform algebras such as $R(X)$, for suitable compact plane sets $X$. These examples are inspired by an example in \cite{2012FeinsteinMortini}, where the first author and Mortini also studied these types of regularity points. (In that paper, these points were also described as regularity points of types I and II). We show that it is possible for $R(X)$ to have exactly one point of continuity while having no R-points, and it is also possible for $R(X)$ to have exactly one R-point while having no points of continuity.

In Section 4 of this note we give a new way to construct Swiss cheese sets $X$ such that $R(X)$ has no non-trivial Jensen measures, but $R(X)$ is not regular. Such examples were first constructed in \cite{2001Feinstein} by the first author. Our new construction is more general, and allows us to obtain additional properties. In particular, we show that in such examples, the ``exceptional set'' of points where regularity fails (the set of points of discontinuity, and also the set of non-R-points) can have positive area. We also provide a more elementary argument to show that there are no non-trivial Jensen measures for our Swiss cheese sets $X$, as an alternative to the method used in \cite{2001Feinstein}, which was based on the deeper theory of the fine topology. In the next section  we describe the background in more detail, and introduce the definitions, terminology and preliminary results we need.

\section{Preliminaries}

Throughout this note, we shall only consider unital, complex algebras. By a compact plane set we mean a \emph{non-empty}, compact subset of the complex plane. For $F\subseteq \C$, we denote the interior of $F$ by $\tint F$.

Let $X$ be a non-empty, compact Hausdorff space.
We denote by $C(X)$ the algebra of all complex-valued continuous functions on $X$. When endowed with the uniform norm, $C(X)$ is a Banach algebra. A \emph{Banach function algebra on $X$}  is a  subalgebra $A$ of $C(X)$ such that $A$ separates the points of $X$ and contains the constant functions, and such that $A$ has a complete algebra norm. We say that $A$ is a \emph{uniform algebra on $X$} if $A$ is a Banach function algebra on $X$ whose norm is the uniform norm on $X$.
Let $A$ be a Banach function algebra on $X$. We often identify points of $X$ with the corresponding evaluation characters on $A$. We say that $A$ is \emph{natural} (on $X$) if these are the only characters on $A$.

Let $A$ be a natural Banach function algebra on $X$, and let $x\in X$. We denote by $J_x$ the ideal of functions $f$ in $A$ such that $x$ is in the interior of the zero set of $f$, $f^{-1}(\{0\})$. We denote by $M_x$ the ideal of functions $f$ in $A$ such that $f(x) = 0$. Recall that, for an ideal $I$ in $A$, the \emph{hull} of $I$, denoted by $h(I)$, is defined by
\[ h(I)=\bigcap_{f\in I}\{z\in X:f(z)=0\}.\]
We say that $x$ is a \emph{point of continuity} (for $A$) if, for all $y \in X \setminus \{x\}$ we have $ J_y\nsubseteq M_x$; 
we say that $x$ is an \emph{R-point} (for $A$) if, for all $y \in X \setminus \{x\}$ we have $J_x\nsubseteq M_y$. 
We say that $A$ is \emph{regular} if, for every closed subset $F$ of $X$ and every $y\in X\setminus F$, there exists $f\in A$ with $f(y)=1$ and $f(F)\subseteq \{0\}$.

It is standard that the natural Banach function algebra $A$ is regular if and only if every point of $X$ is a point of continuity, and this is also equivalent to the condition that every point of $X$ is an R-point (\cite{2012FeinsteinMortini,FeinsteinSomerset2000}).

Let $x \in X$. Then $x$ is a point of continuity if and only if, for all $y\in X\setminus \{x\}$, we have $x\notin h(J_y)$; $x$ is an R-point if and only if $h(J_x)=\{x\}$.

Let $A$ be a uniform algebra on $X$, and let $\varphi$ be a character on $A$.
A \emph{representing measure} $\mu$ for $\varphi$ is a regular, Borel probability measure supported on $X$ such that
\[ \varphi(f) = \int_X f\,{\rm d}\mu \qquad (f\in A).
\]
A \emph{Jensen measure} $\mu$ for $\varphi$ is a representing measure for $\varphi$ such that
\[ \log\abs{\varphi(f)} \leq \int_X \log\abs{f(w)}\,{\rm d}\mu(w) \qquad (f\in A),
\] where $\log(0):=-\infty$.
It is standard (see, for example, \cite[p. 114]{browder1969} or \cite[p. 33]{gamelin1984}) that every character on $A$ has at least one Jensen measure.

Let $x \in X$. We say that a representing measure for $x$ is \emph{trivial} if it is the point mass measure at $x$. As noted in \cite{2001Feinstein}, if $x$ is a point of continuity, then there are no non-trivial Jensen measures for $x$.
In fact the elementary argument that proves this actually establishes the following lemma. (The first part of this result is essentially stated on \cite[page 33]{gamelin1984}.)

\begin{lemma}
\label{Jensen_measures}
Let $A$ be a natural uniform algebra on $X$, and let $x\in X$. Suppose that $x$ has a non-trivial Jensen measure $\mu$, and let $F$ be the closed support of $\mu$. Then, for all $y \in F\setminus \{x\}$, we have $J_y \subseteq M_x$. Thus $x$ is not a point of continuity for $A$, and no points of $F\setminus \{x\}$ can be R-points for $A$.
\end{lemma}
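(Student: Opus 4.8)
The plan is to establish the inclusion $J_y\subseteq M_x$ for every $y\in F\setminus\{x\}$ directly from the Jensen inequality, and then to read off the two stated consequences from the definitions of points of continuity and R-points recalled in the Preliminaries. I would first note that $F\setminus\{x\}$ is non-empty: since $\mu$ is a probability measure its closed support $F$ is non-empty, and if $F=\{x\}$ then $\mu$ is supported on a single point, hence $\mu=\delta_x$, contradicting the assumption that $\mu$ is non-trivial.

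For the inclusion, I would fix $y\in F\setminus\{x\}$ and an arbitrary $f\in J_y$, and aim to show $f(x)=0$ (the case $f=0$ being trivial). By the definition of $J_y$ there is an open set $U$ with $y\in U$ on which $f$ vanishes identically, and since $y$ lies in the closed support of $\mu$ we have $\mu(U)>0$. Applying the Jensen inequality for $\mu$ to $f$ gives $\log\abs{f(x)}\le\int_X\log\abs{f(w)}\,\mathrm{d}\mu(w)$. The integrand here takes values in $[-\infty,\infty)$ and is bounded above by the finite constant $\log\|f\|_X$, so the integral is well defined in $[-\infty,\infty)$; and because the integrand equals $-\infty$ on the set $U$, which has positive $\mu$-measure, the integral equals $-\infty$. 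Hence $\log\abs{f(x)}=-\infty$, that is, $f(x)=0$, so $f\in M_x$.

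The two consequences are then immediate. Choosing any $y\in F\setminus\{x\}$, we have $y\in X\setminus\{x\}$ with $J_y\subseteq M_x$, which is precisely the negation of the condition (``$J_y\not\subseteq M_x$ for all $y\in X\setminus\{x\}$'') defining $x$ to be a point of continuity. Likewise, for any $y\in F\setminus\{x\}$, the inclusion $J_y\subseteq M_x$ together with $x\in X\setminus\{y\}$ negates the condition defining $y$ to be an R-point, so no such $y$ is an R-point. The only point needing care is the middle step: one must check that the Jensen integral is genuinely $-\infty$ and not an indeterminate $\infty-\infty$, which is why the finite upper bound $\log\|f\|_X$ is invoked; this is also the only place where the hypotheses — the Jensen inequality, and the fact that $F$ is the \emph{support} of $\mu$ (so that open neighbourhoods of points of $F$ have positive measure) rather than merely a set of full measure — are actually used.
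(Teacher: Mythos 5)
Your proof is correct and is exactly the standard elementary argument the paper alludes to (and which it attributes in part to \cite[p.~33]{gamelin1984}): the Jensen inequality applied to $f\in J_y$, together with the fact that a neighbourhood of a support point has positive measure, forces the integral to be $-\infty$ and hence $f(x)=0$. Your attention to the non-emptiness of $F\setminus\{x\}$ and to the integral being genuinely $-\infty$ rather than indeterminate is exactly the right care, and the deduction of the two consequences from the definitions matches the paper's intent.
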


For more details concerning the above definitions, see standard texts on uniform algebras \cite{browder1969,gamelin1984, ELStout1971}, and for  commutative Banach algebras, see \cite{HGDales2000}.

A \emph{Swiss cheese set} is a compact plane set obtained by deleting a sequence of open discs from a closed disc. Since they were first introduced by Roth \cite{roth1938}, there have been numerous applications of Swiss cheese sets in the literature. See, for example, \cite{browder1969,2014Feinstein,gamelin1984,ELStout1971}.

Let $X$ be a compact plane set. By $P(X)$ we denote the set of those functions $f\in C(X)$ which can be uniformly approximated on $X$ by polynomial functions; by $R(X)$ we denote the set of those functions $f\in C(X)$ which can be uniformly approximated on $X$ by rational functions with no poles on $X$; by $A(X)$ we denote the set of those functions $f\in C(X)$ which are holomorphic on the interior of $X$. It is standard that when endowed with the uniform norm, all of $P(X)$, $R(X)$ and $A(X)$ are natural uniform algebra on $X$. It is clear that $P(X)\subseteq R(X)\subseteq A(X)$, and Mergelyan's theorem \cite[p.48]{gamelin1984} asserts that if $\C\setminus X$ is connected, then $P(X)=R(X)=A(X)$.  Note that if $R(X)$ is regular, there are no non-trivial Jensen measures supported on $X$. In \cite{2001Feinstein}, the first author constructed Swiss cheese sets $X$ where $R(X)$ admits no non-trivial Jensen measures while $R(X)$ is not regular. In the examples, the line segment $I=[-1/2,1/2]$ is contained in $X$, and only the points in $X\setminus I$ are points of continuity. The proof in \cite{2001Feinstein} that $R(X)$ has no non-trivial Jensen measures uses the theory of Jensen interior, see \cite[p. 319]{gamelin1983} or papers of Debiard and Gaveau. In Section~\ref{R_points_and_points_of_continuity}, we use a more elementary approach based on regularity points.

\section{R-points and points of continuity for $R(X)$} \label{R_points_and_points_of_continuity}

In this section we construct a compact plane set $X$ such that $R(X)$ has exactly one R-point but has no points of continuity. We also construct a compact plane set $X$ such that $R(X)$ has exactly one point of continuity but has no R-points.

Before we proceed to examples where R-points differ from points of continuity, we give the following lemma, which establishes a relationship between these two types of regularity points.

\begin{lemma} \label{Isolated_Point_Argument}
Let $X$ be a compact Hausdorff space, let $A$ be a natural Banach function algebra on $X$, and let $x\in X$. Suppose that there is an open neighbourhood $U$ of $x$ such that each point in $U\setminus \{x\}$ is a point of continuity for $A$. Then $x$ is an R-point for $A$.
\end{lemma}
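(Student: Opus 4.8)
To prove that $x$ is an R-point I would work with the equivalent condition $h(J_x)=\{x\}$. Since $J_x\subseteq M_x$ gives $x\in h(J_x)$ automatically, the entire task reduces to: given $y\in X\setminus\{x\}$, produce a function $f\in J_x$ with $f(y)\neq 0$. Fix such a $y$. First I would use that $X$, being compact Hausdorff, is regular to choose an open set $V$ with $x\in V\subseteq\overline V\subseteq U\setminus\{y\}$, and put $\partial V:=\overline V\setminus V$. Then $\partial V$ is compact (closed in the compact set $\overline V$) and, since $x\in V$ and $\overline V\subseteq U$, is contained in $U\setminus\{x\}$. Hence each point $z\in\partial V$ is a point of continuity, and applying the stated characterisation of points of continuity to $z$ with auxiliary point $x$ gives $z\notin h(J_x)$; thus $\partial V\cap h(J_x)=\varnothing$. (If it happens that $\partial V=\varnothing$, i.e.\ $V$ is clopen, one finishes at once: Shilov's idempotent theorem provides $e\in A$ with $\widehat e=\chi_V$ on $X$, and then $1-e$ vanishes on the open set $V\ni x$, so $1-e\in J_x$, while $(1-e)(y)=1$ as $y\notin\overline V$. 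So assume $\partial V\neq\varnothing$.)

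Next, since $h(J_x)$ is the common zero set of the functions in $J_x$ and $\partial V$ is compact and disjoint from it, I would extract finitely many $f_1,\dots,f_n\in J_x$ whose common zero set $Z:=\bigcap_{i=1}^{n}\{z\in X:f_i(z)=0\}$ still misses $\partial V$. The key feature to emphasise is that $Z$ \emph{automatically contains an open neighbourhood of $x$}: each $f_i$ vanishes on some open neighbourhood $N_i$ of $x$, so $N:=\bigcap_i N_i\subseteq Z$. Since $Z\cap\partial V=\varnothing$, the set $Z\cap\overline V=Z\cap V$ is both open and closed in $Z$, and it contains the neighbourhood $N\cap V$ of $x$.

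Then I would localise by passing to $B:=A/\overline I$, where $I$ is the ideal of $A$ generated by $f_1,\dots,f_n$. Since $A$ is natural, the character space of $B$ may be identified homeomorphically with $Z$, a character being evaluation at a point of $Z$. As $Z\cap V$ is clopen in $Z$, Shilov's idempotent theorem yields an element $\bar e\in B$ whose Gelfand transform on $Z$ is $\chi_{Z\cap V}$; I take \emph{any} lift $e\in A$ of $\bar e$ (crucially, I do not need $e$ itself to be idempotent). For $z\in Z$ the evaluation character at $z$ factors through $B$, so $\widehat e$ agrees with $\chi_{Z\cap V}$ on $Z$; in particular $e\equiv 1$ on $N\cap V$, hence $1-e$ vanishes on the open neighbourhood $N\cap V$ of $x$, so $1-e\in J_x$. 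To conclude: if $y\notin Z$ then $f_i(y)\neq 0$ for some $i$, and $f_i\in J_x$; if $y\in Z$ then $y\in Z\setminus V$ (as $y\notin\overline V$), so $(1-e)(y)=1$, with $1-e\in J_x$. Either way $y\notin h(J_x)$, and since $y\in X\setminus\{x\}$ was arbitrary, $h(J_x)=\{x\}$, i.e.\ $x$ is an R-point.

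The step I expect to be the main obstacle is the actual construction of the function: the naive attempt of taking some element of $J_x$ and hoping it is non-zero at $y$ fails, because a function vanishing near $x$ may vanish on large sets far from $x$, so there is no direct way to separate $y$ from $x$. The device that resolves this is to quotient by $\overline I$ so that the common zero set $Z$ becomes the whole character space: there $Z\cap V$ is genuinely clopen, Shilov's theorem applies, and—the key bookkeeping point—$Z$ already contains a neighbourhood of $x$, so any lift of the resulting idempotent is automatically $\equiv 1$ near $x$. The one genuinely fiddly verification is that the character space of $A/\overline I$ is exactly $Z$ and that point evaluations factor through the quotient as stated; this is routine, using naturality of $A$ and separation of points, but should be checked with care.
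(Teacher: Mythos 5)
Your argument is correct, but it takes a genuinely different route from the paper. The paper's proof is two lines: each point of continuity $y\in U\setminus\{x\}$ gives $J_x\nsubseteq M_y$, so $h(J_x)\cap(U\setminus\{x\})=\emptyset$, and then one quotes the connectedness of $h(J_x)$ (Theorem~3.2 of Feinstein--Somerset, itself proved via the Shilov idempotent theorem) to conclude $h(J_x)=\{x\}$. You avoid citing the connectedness result and instead, for each fixed $y\neq x$, construct an explicit element of $J_x\setminus M_y$: you take a relatively compact neighbourhood $V$ of $x$ with $\overline V\subseteq U\setminus\{y\}$, use the points of continuity on $\partial V$ together with compactness to choose finitely many $f_1,\dots,f_n\in J_x$ whose common zero set $Z$ misses $\partial V$, and then apply Shilov in the quotient $A/\overline I$ (whose character space is $Z$, by naturality) to the clopen set $Z\cap V$; the decisive observation that $Z$ contains a genuine $X$-open neighbourhood $N$ of $x$, so that any lift $e$ of the idempotent satisfies $e\equiv 1$ on $N\cap V$ and hence $1-e\in J_x$, is exactly what makes the localisation work and is the step that a naive quotient by $\overline{J_x}$ would not provide. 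The details check out (including the degenerate case $\partial V=\emptyset$ and the fact that $\overline I\subseteq M_x$ is proper). What each approach buys: the paper's proof is much shorter and delegates the topology to a known theorem; yours is self-contained modulo the Shilov idempotent theorem and in effect inlines a local version of the hull-connectedness argument, producing concrete witnesses in $J_x\setminus M_y$. Both ultimately rest on Shilov, so neither is more elementary in substance, but your version makes the mechanism visible.
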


\begin{proof}
Since each $y\in U\setminus\{x\}$ is a point of continuity, we have $J_x\nsubseteq M_y$ and thus $y\notin h(J_x)$. This shows $h(J_x)\cap (U\setminus\{x\})=\emptyset$. Since $h(J_x)$ is connected (\cite[Theorem~3.2]{FeinsteinSomerset2000}), we conclude that $h(J_x)=\{x\}$ and $x$ is an R-point.
\end{proof}
In particular, the above lemma holds for $R(X)$.

\medskip

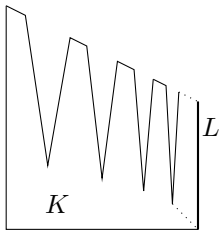
\begin{figure}[h]
\centering
\begin{tikzpicture}[scale=1.7]
\draw (0,1.75)--(0,0)--(1.5,0)--(1.5,1);
\draw [thick] (1.5,0)--(1.5,1);
\draw (0,1.75)--(0.15,1.675)--(0.325,0.5)--(0.5,1.5)--(0.63,1.435)--(0.75,0.4)--(0.87,1.315)--(1,1.25)--(1.075,0.3)--(1.15,1.175);
\draw  (1.15,1.175)--(1.25,1.125)--(1.3,0.2)--(1.35,1.075);
\draw [dotted] (1.35,1.075)--(1.5,1);
\draw [dotted] (1.3,0.2)--(1.5,0);
\node at (0.4,0.2) {$K$};
\node at (1.6,0.8) {$L$};
\end{tikzpicture}
\caption{A compact plane set $K$, where the slits accumulate on the right.} \label{Slit_Domains}
\end{figure}

The building blocks of our examples are closures of \emph{slit domains}, such as the compact plane set $K$ shown in figure~\ref{Slit_Domains}. The slits cut in $K$ become thinner and longer, while accumulating on the whole of the line segment forming the right edge of $K$, which we denote by $L$.

The following proposition is a special case of a more general theorem on prime ends introduced in \cite[Section~2.4]{Pommerenke1992}.

\begin{proposition} \label{Prime_Ends}
Let $K$, $L$ be as decribed above (and shown in figure~\ref{Slit_Domains}). Then the Riemann map $\psi$ from $\tint K$ onto the open unit disc admits a continuous extension from $K$ onto the closed unit disc, such that $\psi(z)\neq \psi(w)$ for all $z\in K\setminus L$ and $w\in K\setminus \{z\}$, and $\psi$ is constant on $L$.
\end{proposition}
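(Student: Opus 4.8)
The plan is to obtain the proposition from Carath\'eodory's theory of prime ends (see \cite[Section~2.4]{Pommerenke1992}) by reading off the prime end structure of $\Omega:=\tint K$. To set up: $\Omega$ is a bounded, simply connected domain --- it is connected by construction, and $\C\setminus\Omega$ is connected because each deleted slit is joined, through the top edge, to the unbounded complementary component --- so it carries a Riemann map $\psi\colon\Omega\to\ob{0}{1}$, and by the prime end theorem $\psi$ extends to a homeomorphism of the prime end compactification of $\Omega$ onto $\cb{0}{1}$, restricting to a homeomorphism of the space of prime ends onto $\partial\ob{0}{1}$. The bridge to the statement is the standard fact that, for $z_0\in\partial\Omega$, the map $\psi\colon\Omega\to\ob{0}{1}$ extends continuously to $z_0$ exactly when its cluster set at $z_0$ is a single point, and that this cluster set is the closure of the set of those $\zeta\in\partial\ob{0}{1}$ for which $z_0$ lies in the impression of the prime end corresponding to $\zeta$.

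Away from $L$ the analysis is routine: near each $z_0\in\partial\Omega\setminus L$ the boundary is locally a Jordan arc --- a slit wall, a slit tip, an outer edge, or a corner --- with $\Omega$ on one side, so $z_0$ is the impression of a unique prime end, of the first kind, and distinct such points give distinct prime ends. Hence $\psi$ extends continuously and injectively to $\partial\Omega\setminus L$, with values in $\partial\ob{0}{1}$; combined with the injectivity of $\psi$ on $\Omega$ and the fact that $\psi(\Omega)=\ob{0}{1}$ misses $\partial\ob{0}{1}$, this already yields $\psi(z)\neq\psi(w)$ for all distinct $z,w\in K\setminus L$.

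The substance is the behaviour near $L$, and this is where the geometric hypotheses enter. The key structural fact to prove is that $\Omega$ has \emph{exactly one} prime end $p_L$ whose impression meets $L$, and that $I(p_L)=L$. Since the slits become arbitrarily thin and long while accumulating on the whole of $L$, one can produce a sequence of crosscuts $\gamma_n$ of $\Omega$ --- for instance a nearly vertical segment cutting off all slits from the $n$-th onwards --- whose cut-off regions $V_n$ (the component of $\Omega\setminus\gamma_n$ not containing a fixed reference point of $\Omega$) are decreasing, contain every point of $\Omega$ within distance $1/n$ of $L$, and satisfy $\overline{V_n}\supseteq L$; this chain determines a prime end $p_L$ with $I(p_L)=\bigcap_n\overline{V_n}=L$. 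Moreover every point of $L$ is approached in $\Omega$ only through the thin channels that the $V_n$ capture, which forces any prime end whose impression meets $L$ to be represented by a chain equivalent to $(\gamma_n)$; hence $p_L$ is the unique such prime end. Consequently the cluster set of $\psi$ at every point of $L$ is the single point $\zeta_0:=\psi(p_L)\in\partial\ob{0}{1}$. This delivers the three conclusions: the cluster set of $\psi$ is a singleton at every point of $\partial\Omega$, so $\psi$ extends to a continuous map $\psi\colon K\to\cb{0}{1}$, which is onto because $\psi(K)$ is compact, contains $\ob{0}{1}$, and lies in $\cb{0}{1}$; $\psi$ is constant, with value $\zeta_0$, on $L$; and for $z\in K\setminus L$ we have $\psi(z)\neq\zeta_0$, since interior points map into $\ob{0}{1}$ while a point of $\partial\Omega\setminus L$ is the impression of a prime end different from $p_L$ and so has a different image under the homeomorphism. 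Together with the previous paragraph this gives $\psi(z)\neq\psi(w)$ whenever $z\in K\setminus L$ and $w\in K\setminus\{z\}$.

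I expect the main obstacle to be the structural fact just used: building the crosscut chain that realises $L$ as the impression of a single prime end, equivalently checking in quantitative form that ``thinner, longer, accumulating on all of $L$'' forces $\psi(z)\to\zeta_0$ as $\dist(z,L)\to 0$. This is exactly the content packaged by the general prime end theorem of \cite[Section~2.4]{Pommerenke1992}; everything else reduces to standard prime end bookkeeping together with the local Jordan-arc description of $\partial\Omega$ away from $L$.
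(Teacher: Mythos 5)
Your overall route is the same one the paper takes: the paper gives no written proof of this proposition at all, recording it simply as a special case of the prime end theory in \cite[Section~2.4]{Pommerenke1992}, and your proposal fills in how that theory is meant to be applied, correctly identifying that the whole content beyond the general theorem is the claim that $L$ is the impression of exactly one prime end while every other boundary point is a Jordan-arc point of the first kind. Two steps, however, need repair. First, the chain you build is not a null-chain in Pommerenke's sense: your crosscuts $\gamma_n$ are nearly vertical segments running essentially the full height of the domain, so their diameters do not tend to $0$, whereas prime ends in \cite[Section~2.4]{Pommerenke1992} are equivalence classes of null-chains (crosscuts whose diameters tend to zero), so as written ``this chain determines a prime end $p_L$'' does not follow. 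The hypothesis that the slits become \emph{longer} as well as thinner supplies the fix: take instead the short crosscut $\sigma_n$ running from the tip of the $n$-th slit straight down to the bottom edge of $K$; its diameter is the height of the gap under that tip, which tends to $0$, its endpoints lie on $\partial(\tint K)$, and it cuts off the same right-hand region $V_n$, so $(\sigma_n)$ is a genuine null-chain with $\bigcap_n \overline{V_n}=L$.

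Second, the uniqueness of the prime end whose impression meets $L$ is the crux and is asserted rather than proved (you flag this yourself); it is not part of the general correspondence theorem. It can be finished from the geometric fact you already isolated, that every point of $\tint K$ sufficiently close to $L$ lies in $V_n$. Indeed, for $w\in\partial(\tint K)$ the cluster set of $\psi$ at $w$ is exactly the set of points $\zeta$ of the unit circle with $w$ in the impression of the corresponding prime end: both inclusions follow by applying $\psi$ or $\psi^{-1}$ to a convergent sequence, since the impression of the prime end at $\zeta$ is the cluster set of $\psi^{-1}$ at $\zeta$. Now for $w\in L$, every point of $\tint K$ near $w$ lies in $V_n$, so the cluster set of $\psi$ at $w$ is contained in $\overline{\psi(V_n)}$ for every $n$; since $(\sigma_n)$ is a null-chain, $(\psi(\sigma_n))$ is a null-chain of the disc and these closures shrink to the single point $\psi(p_L)$. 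This simultaneously gives the constancy of the extension on $L$, the fact that the cluster set at each point of $L$ is a singleton, and the uniqueness of $p_L$ among prime ends whose impression meets $L$, with no separate classification needed. With these two repairs your argument is complete, and is in fact more detailed than the paper's treatment; note also that your ``locally a Jordan arc'' analysis away from $L$ uses that the slits are genuine thin wedges (the domain lies on one side of each slit wall), which is consistent with Figure~\ref{Slit_Domains} but would fail for degenerate segment slits.
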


We remark that if $f\in A(K)$ and $f$ is constant on the line segment forming the \emph{left} edge of $K$, then an easy application of the reflection principle (see, for example, \cite[Chapter~IX,~Theorem~2.1]{Lang1999}) shows that $f$ is constant on $K$.  A similar compact set $K$ was used in \cite[Example~4.2]{2012FeinsteinMortini}.

Let $K_n$ be a sequence of such compact plane sets side by side, touching, tapering, and accumulating on the right  at a point $x_0$, as shown in figure~\ref{Only_Rpoints}. Set
\[ X = \bigcup_{n=1}^\infty K_n\cup \{x_0\}.\]
This gives us our first example.
Note that, for the remainder of this note, unless otherwise specified, the ideals $J_x$ and $M_x$ considered will always be the ideals in the relevant algebra $R(X)$ under consideration.

\begin{figure}[h]
\centering
\begin{tikzpicture}[scale=2.5]
\draw (0,1.75)--(0,0)--(1.5,0)--(1.5,1);
\draw (0,1.75)--(0.15,1.675)--(0.325,0.5)--(0.5,1.5)--(0.63,1.435)--(0.75,0.4)--(0.87,1.315)--(1,1.25)--(1.075,0.3)--(1.15,1.175);
\draw  (1.15,1.175)--(1.25,1.125)--(1.3,0.2)--(1.35,1.075);
\draw [dotted] (1.35,1.075)--(1.5,1);
\draw [dotted] (1.3,0.2)--(1.5,0);

\draw (0*0.571429+1.5,1.75*0.571429)--(0*0.571429+1.5,0*0.571429)--(1.5*0.571429+1.5,0*0.571429)--(1.5*0.571429+1.5,1*0.571429);
\draw (0*0.571429+1.5,1.75*0.571429)--(0.2*0.571429+1.5,1.65*0.571429)--(0.325*0.571429+1.5,0.5*0.571429)--(0.45*0.571429+1.5,1.525*0.571429)--(0.65*0.571429+1.5,1.425*0.571429)--(0.75*0.571429+1.5,0.4*0.571429)--(0.85*0.571429+1.5,1.325*0.571429)--(1*0.571429+1.5,1.25*0.571429)--(1.075*0.571429+1.5,0.3*0.571429)--(1.15*0.571429+1.5,1.175*0.571429);
\draw  (1.15*0.571429+1.5,1.175*0.571429)--(1.25*0.571429+1.5,1.125*0.571429)--(1.3*0.571429+1.5,0.2*0.571429)--(1.35*0.571429+1.5,1.075*0.571429);
\draw [dotted] (1.35*0.571429+1.5,1.075*0.571429)--(1.5*0.571429+1.5,1*0.571429);
\draw [dotted] (1.3*0.571429+1.5,0.2*0.571429)--(1.5*0.571429+1.5,0*0.571429);

\draw (0*0.326531+2.3571435,1.75*0.326531)--(0*0.326531+2.3571435,0*0.326531)--(1.5*0.326531+2.3571435,0*0.326531)--(1.5*0.326531+2.3571435,1*0.326531);
\draw (0*0.326531+2.3571435,1.75*0.326531)--(0.2*0.326531+2.3571435,1.65*0.326531)--(0.325*0.326531+2.3571435,0.5*0.326531)--(0.45*0.326531+2.3571435,1.525*0.326531)--(0.65*0.326531+2.3571435,1.425*0.326531)--(0.75*0.326531+2.3571435,0.4*0.326531)--(0.85*0.326531+2.3571435,1.325*0.326531)--(1*0.326531+2.3571435,1.25*0.326531)--(1.075*0.326531+2.3571435,0.3*0.326531)--(1.15*0.326531+2.3571435,1.175*0.326531);
\draw  (1.15*0.326531+2.3571435,1.175*0.326531)--(1.25*0.326531+2.3571435,1.125*0.326531)--(1.3*0.326531+2.3571435,0.2*0.326531)--(1.35*0.326531+2.3571435,1.075*0.326531);
\draw [dotted] (1.35*0.326531+2.3571435,1.075*0.326531)--(1.5*0.326531+2.3571435,1*0.326531);

\draw (0*0.186589+2.84694,1.75*0.186589)--(0*0.186589+2.84694,0*0.186589)--(1.5*0.186589+2.84694,0*0.186589)--(1.5*0.186589+2.84694,1*0.186589);
\draw (0*0.186589+2.84694,1.75*0.186589)--(0.2*0.186589+2.84694,1.65*0.186589)--(0.325*0.186589+2.84694,0.5*0.186589)--(0.45*0.186589+2.84694,1.525*0.186589)--(0.65*0.186589+2.84694,1.425*0.186589)--(0.75*0.186589+2.84694,0.4*0.186589)--(0.85*0.186589+2.84694,1.325*0.186589)--(1*0.186589+2.84694,1.25*0.186589)--(1.075*0.186589+2.84694,0.3*0.186589)--(1.15*0.186589+2.84694,1.175*0.186589);
\draw  (1.15*0.186589+2.84694,1.175*0.186589)--(1.25*0.186589+2.84694,1.125*0.186589)--(1.3*0.186589+2.84694,0.2*0.186589)--(1.35*0.186589+2.84694,1.075*0.186589);
\draw [dotted] (1.35*0.186589+2.84694,1.075*0.186589)--(1.5*0.186589+2.84694,1*0.186589);

\draw (0,0)--(3.5,0);
\draw [dotted] (1.5*0.186589+2.84694,1*0.186589)--(3.5,0);
\draw [fill] (3.5,0) circle [radius=0.015];
\node at (3.6,-0.05) {$x_0$};

\node at (0.7,-0.2) {$K_1$};
\node at (1.9,-0.2) {$K_2$};
\node at (2.6,-0.2) {$K_3$};
\node at (3.05,-0.2) {$K_4$};

\end{tikzpicture}
\caption{A compact set $X$, where $R(X)$ admits exactly one R-point (the point $x_0$), but no points of continuity.} \label{Only_Rpoints}
\end{figure}
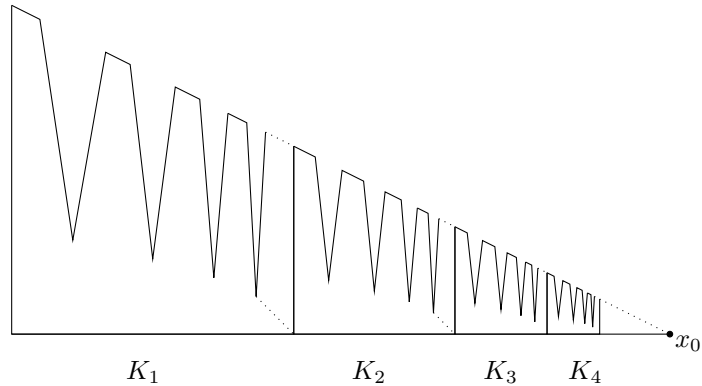

\begin{theorem} \label{Thm_One_Rpoint}
Let $X$ be the compact plane set constructed above. Then $\{x_0\}$ is the only R-point for $R(X)$, while $R(X)$ has no points of continuity.
\end{theorem}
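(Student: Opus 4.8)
The plan is to establish two things: first, that $x_0$ is an R-point and that no other point of $X$ is an R-point; second, that $X$ has no points of continuity at all. I would begin with the structure of each building block $K_n$. By Proposition~\ref{Prime_Ends}, for each $n$ there is a Riemann map $\psi_n\colon\tint K_n\to\ob{0}{1}$ extending continuously to $K_n$, collapsing the right edge $L_n$ of $K_n$ to a single point and injective elsewhere. The key observation is that $L_n$ is also the left edge of $K_{n+1}$ (the blocks touch), and the remark following Proposition~\ref{Prime_Ends} (reflection principle) says that any $f\in A(K_{n+1})$ that is constant on the left edge $L_n$ of $K_{n+1}$ must be constant on all of $K_{n+1}$; dually the same applies reflecting across the right. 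I would use these facts to analyse the ideals $J_y$ in $R(X)$.

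Next I would show that \emph{no point of any $K_n$ is a point of continuity}. Fix $n$ and a point $y$ in the left edge $L_{n-1}$ of $K_n$ (or, for $K_1$, any point of the outer left edge). I claim every point $z$ of $L_{n-1}\setminus\{y\}$ witnesses the failure: I want to produce $g\in J_z$ (so $g$ vanishes on a neighbourhood of $z$ in $X$) with $g(y)\neq 0$ — wait, that is backwards; to show $y$ is \emph{not} a point of continuity I must find $w\neq y$ with $J_w\subseteq M_y$, i.e. every $g\in R(X)$ vanishing near $w$ also vanishes at $y$. The mechanism is precisely the prime-end collapse: if $w$ lies on the collapsed edge $L_{n}$ and $g\in J_w$ vanishes on a neighbourhood of $w$ in $K_n$, then $g\!\restriction_{K_n}\in A(K_n)\cong A(\cb{0}{1})$ via $\psi_n$, vanishes on an open subarc of the boundary circle corresponding to the slit side, hence is identically zero on $K_n$, forcing $g(y)=0$ for the chosen $y\in L_{n-1}\subset K_n$. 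The slits accumulating on $L_n$ are what make $\psi_n$ collapse $L_n$, and this is the crux. So taking $w$ any point of $L_n$ shows $y$ is not a point of continuity; running this for every $n$ and every $y\in K_n$ (pushing the slit/collapse argument through the whole block) shows no point of $\bigcup K_n$ is a point of continuity. One must also check $x_0$ is not a point of continuity: since $x_0=\lim$ of the collapsed edges $L_n$, take $w\in L_n$ for large $n$; $J_w\subseteq M_{x_0}$ fails? No — here I need $J_w\subseteq M_{x_0}$, and again $g\in J_w$ forces $g\equiv0$ on $K_n$, but that does not directly control $g(x_0)$. Instead I use that $g$ vanishing on $K_n$ for arbitrarily large $n$, combined with continuity of $g$ at $x_0$ and $K_n\to x_0$, would give $g(x_0)=0$ — but $J_w$ for a fixed $w$ only kills one $K_n$. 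So for $x_0$ I argue differently: any $g\in J_w$ with $w\in L_n$ vanishes on all of $K_n$; choosing $w$ deep inside the ``tail'' does not suffice, so instead I show directly that $x_0$ \emph{is} an R-point and separately that it fails continuity by exhibiting, for $w=x_0$... actually $x_0$ need not fail continuity — the theorem says $x_0$ is the \emph{only} R-point but \emph{no} points of continuity, so $x_0$ is also not a point of continuity, and I get this by picking $w$ on the collapsed edge $L_n$ and noting $g\in J_w$ vanishes on $K_n$ including the sub-edge $L_{n-1}$, then iterating: $g$ constant ($=0$) on $L_{n-1}$ forces via reflection $g\equiv 0$ on $K_{n-1}$, hence on $L_{n-2}$, and so on down to $K_1$; this shows $J_w\subseteq M_y$ for \emph{every} $y\in K_1\cup\cdots\cup K_n$, but still not $x_0$. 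The honest fix: for $x_0$, use that $h(J_w)$ for $w\in L_n$ contains, by the cascade just described, points arbitrarily far left, so $h(J_w)$ is large; and separately, to kill continuity at $x_0$, pick $w=$ a point in $K_n$ whose defining vanishing propagates rightward through $K_{n+1},K_{n+2},\dots$ by reflection across successive shared edges, reaching every $K_m$ with $m>n$ and hence, by continuity, $x_0$. This \emph{rightward} cascade is the right tool for $x_0$.

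For the R-point claims: $x_0$ has a neighbourhood $U$ in $X$ meeting only the tails of the $K_n$; but those points are not points of continuity, so Lemma~\ref{Isolated_Point_Argument} does not apply. Instead I argue $h(J_{x_0})=\{x_0\}$ directly: a function $f\in J_{x_0}$ vanishes on some $X\cap\ob{x_0}{r}$, hence on all but finitely many $K_n$ entirely, and on a right-portion of the remaining finitely many; by the reflection/prime-end rigidity, vanishing on a right-portion of $K_n$ forces $f\equiv0$ on $K_n$, and this cascades \emph{leftward} through $K_{n-1},\dots,K_1$. Hence $f\equiv 0$ on $X$, so $J_{x_0}=\{0\}$ and $h(J_{x_0})=X$?? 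That is wrong — $h(\{0\})=X$, contradicting R-point. So $J_{x_0}$ must be shown \emph{nontrivial}: I need a nonzero $f\in R(X)$ vanishing near $x_0$. Such $f$ exists: $X$ is disconnected-ish near $x_0$ only in the limit; but each $K_n$ is connected to $x_0$ through the chain, so vanishing near $x_0$ may indeed force vanishing everywhere, giving $h(J_{x_0})=X\neq\{x_0\}$ — then $x_0$ is \emph{not} an R-point, contradicting the theorem. I must be misreading the geometry: the $K_n$ accumulate at $x_0$ but are glued \emph{only along vertical edges to each other}, and $x_0$ is a single limit point not on any $K_n$; a function can vanish in a disc around $x_0$ missing every $K_n$ if the $K_n$ shrink fast — but they ``touch'' so the chain reaches $x_0$. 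The resolution, which I would nail down carefully, is that $h(J_{x_0})=\{x_0\}$ holds because $R(X)$ is rich enough: rational functions with poles accumulating appropriately give $f\in J_{x_0}$ nonconstant, and the prime-end collapse means such $f$ need \emph{not} propagate — the collapsed edges $L_n$ act as ``one-way'' barriers, so $f$ can vanish on a neighbourhood of $x_0$ (all of the far tails) yet be nonzero on $K_1$. For the other points $y\neq x_0$: $y\in K_n$ for some $n$, and I show $h(J_y)$ strictly contains $\{y\}$ — indeed $J_y\subseteq M_z$ for some $z\neq y$ via the collapse (any $f$ vanishing near $y$ inside $K_n$ vanishes on all of $K_n$, hence at every other point of $K_n$), so $h(J_y)\supseteq K_n\supsetneq\{y\}$, so $y$ is not an R-point. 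The main obstacle, which I will treat most carefully, is exactly this propagation bookkeeping — pinning down in which direction (left, right, both) vanishing cascades through the glued chain, using the reflection principle for the shared \emph{non}-collapsed edges and Proposition~\ref{Prime_Ends} for the collapsed ones — and verifying that near $x_0$ there genuinely exist nonzero $R(X)$-functions vanishing on a neighbourhood, so that $J_{x_0}$ is nontrivial and $h(J_{x_0})=\{x_0\}$; all other assertions then follow by the two cascade lemmas applied block by block.
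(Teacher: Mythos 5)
Your proposal has a genuine gap at the decisive point of the theorem: you never prove that $x_0$ \emph{is} an R-point. To get $h(J_{x_0})=\{x_0\}$ you must, for \emph{every} $x\in K_n$ and every $n$, exhibit a function in $J_{x_0}\setminus M_x$; your text ends with only a heuristic (``rational functions with poles accumulating appropriately'', the collapsed edges as ``one-way barriers''), and you yourself flag this as the thing still to be ``nailed down''. Note that even producing a single nonzero element of $J_{x_0}$ would not suffice. The paper's proof supplies the missing construction: fill in the slits of $K_1,\dots,K_n$ to obtain $F_1,\dots,F_n$ and set $F=K_{n+1}\cup\bigl(\bigcup_{\ell=1}^n F_\ell\bigr)$; this is again the closure of a slit domain whose slits accumulate only on $L_{n+1}$, so by Proposition~\ref{Prime_Ends} its Riemann map $\psi$ extends continuously to $F$, is injective off $L_{n+1}$ and takes a constant value $c$ on $L_{n+1}$. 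The function equal to $\psi-c$ on $F$ and $0$ on $X\setminus F$ lies in $A(X)=R(X)$ (Mergelyan), belongs to $J_{x_0}$ because $X\setminus F$ is a relative neighbourhood of $x_0$, and is nonzero at $x$; hence $J_{x_0}\nsubseteq M_x$.

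The second problem is that your cascade bookkeeping is partly wrong, and you explicitly leave the direction question open, although it is the crux rather than a routine check. In the geometry of Figure~\ref{Only_Rpoints} the slits of $K_m$ accumulate on its \emph{right} edge $L_m$, so the slit-free edge of each block is the left one; the reflection argument therefore applies only across left edges, and vanishing propagates \emph{rightwards}: $f\equiv 0$ on $K_n$ forces $f\equiv 0$ on $L_n$, hence on $K_{n+1}$, and so on, and finally $f(x_0)=0$ by continuity. Your asserted leftward cascade (``$g=0$ on $L_{n-1}$ forces via reflection $g\equiv 0$ on $K_{n-1}$, hence on $L_{n-2}$, and so on down to $K_1$'') is false --- the extended Riemann map of $K_{n-1}$ minus its constant value on $L_{n-1}$ is precisely a counterexample --- and, as you notice yourself, if it were true it would give $J_{x_0}=\{0\}$ and contradict the theorem. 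With the directions fixed, the rightward cascade alone yields both negative halves (for $x\in K_n$ one gets $J_x\subseteq M_y$ for all $y\in\bigcup_{m\geq n}K_m\cup\{x_0\}$, so no point of $X$ is a point of continuity and no point of $X\setminus\{x_0\}$ is an R-point), and the explicit function above yields the positive half; as written, your proposal contains the negative half in garbled form, asserts a false propagation step, and omits the construction on which the positive half rests.
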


\begin{proof}
For each $n \in \N$, let $L_n$ be the line segment forming the right edge of $K_n$, and let $F_n$ be the trapezoid-shaped compact domain obtained by filling the slits in $K_n$.

Since $\C\setminus X$ is connected, $R(X)=A(X)$ by Mergelyan's theorem. Let $n\in \N$, let $x\in K_n$, and let $f \in J_x$.
Then $f$ vanishes identically on $K_n$, which forces $f$ to vanish identically on $K_m$ for all $m\geq n$, and hence also at $x_0$. This shows that $J_x\subseteq M_y$ for all $y\in \bigcup_{m\geq n}K_m \cup \{x_0\}$. Since $n$ is arbitrary, it follows that no points in $X$ can be points of continuity for $R(X)$, and no points in $X \setminus \{x_0\}$ can be R-points. Again let $x\in K_n$ for some $n\in \N$. We show that $J_{x_0}\nsubseteq M_x$. Set
\[ F = K_{n+1}\cup\left(\bigcup_{\ell=1}^n F_\ell\right).\] Then $x\in F$, and $x$ is not on $L_{n+1}$. Note that $F$ is the closure of a slit domain with right hand segment $L_{n+1}$. By Proposition~\ref{Prime_Ends}, The Riemann map $\psi$ from $\tint F$ to the open unit disc has a continuous extension to $F$. We denote the extended function by $\psi$ again, and notice that $\psi(z)\neq \psi(w)$ for $z\in F\setminus L_{n+1}$, and $w\in F\setminus \{z\}$. We denote the constant value taken by $\psi$ on $L_{n+1}$ by $c$, and set
\[ f(z) = \begin{cases}
\psi(z)-c & \text{if }z\in F, \\
0 & \text{if } z\in X\setminus F.\\ \end{cases}\] Then it is clear that the restriction of $f$ to $X$ is in  $A(X)=R(X)$ and $f\in J_{x_0}\setminus M_x$. Thus $J_{x_0}\nsubseteq M_x$. This shows that $x_0$ is an R-point for $R(X)$. By above, $x_0$ is the only R-point.
\end{proof}

\begin{figure}[h]
\centering
\begin{tikzpicture}[scale=1.7]
\draw (0,1.75)--(0,0)--(1.5,0)--(1.5,1);
\draw [thick] (0,1.75)--(0,0);
\draw [dotted] (0,1.75)--(0.1,1.7);
\draw (0.1,1.7)--(0.135,0.2)--(0.17,1.665)--(0.25,1.625)--(0.3,0.3)--(0.35,1.575)--(0.5,1.5)--(0.625,0.4)--(0.75,1.375);
\draw  (0.75,1.375)--(1,1.25)--(1.175,0.5)--(1.35,1.075);
\draw (1.35,1.075)--(1.5,1);
\draw [dotted] (0.135,0.2)--(0,0);
\node at (0.6,0.2) {$K$};
\node at (-0.1,1.3) {$L$};
\end{tikzpicture}
\caption{A compact plane set $K$, where the slits accumulate on the left.} \label{Slit_Domain_Accumulate_Left}
\end{figure}
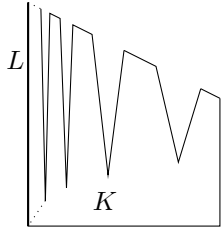

Next we modify the example constructed above to show that there exist compact plane sets $X$, such that $R(X)$ has exactly one point of continuity, but has no R-points. Let $K$ be the closure of a slit domain, with the slits accumulating on the whole of the line segment forming the \emph{left} edge, $L$, of $K$, as shown in  figure~\ref{Slit_Domain_Accumulate_Left}. As in Proposition~\ref{Prime_Ends}, the Riemann mapping $\psi$ from $\tint K$ onto the open unit disc admits a continuous extension from $K$ onto the closed unit disk, such that $\psi(z)\neq \psi(w)$ for all $z\in K\setminus L$ and $w\in K\setminus \{x\}$, and $\psi$ is constant on $L$.
Note that this time, if $f \in A(K)$ and $f$ is constant on the line segment forming the \emph{right} edge of $K$, then $f$ is constant on $K$.

Let $K_n$ be a sequence of such compact plane sets side by side, touching, tapering, and accumulating on the right  at a point $x_0$, as shown in figure~\ref{Only_Point_Of_Continuity}.

\begin{figure}[h]
\centering
\begin{tikzpicture}[scale=2.5]
\draw (0,1.75)--(0,0)--(1.5,0)--(1.5,1);
\draw [dotted] (0,1.75)--(0.1,1.7);
\draw (0.1,1.7)--(0.135,0.2)--(0.17,1.665)--(0.25,1.625)--(0.3,0.3)--(0.35,1.575)--(0.5,1.5)--(0.625,0.4)--(0.75,1.375);
\draw  (0.75,1.375)--(1,1.25)--(1.175,0.5)--(1.35,1.075);
\draw (1.35,1.075)--(1.5,1);
\draw [dotted] (0.135,0.2)--(0,0);

\draw (0*0.571429+1.5,1.75*0.571429)--(0*0.571429+1.5,0*0.571429)--(1.5*0.571429+1.5,0*0.571429)--(1.5*0.571429+1.5,1*0.571429);
\draw [dotted] (0*0.571429+1.5,1.75*0.571429)--(0.1*0.571429+1.5,1.7*0.571429);
\draw (0.1*0.571429+1.5,1.7*0.571429)--(0.135*0.571429+1.5,0.2*0.571429)--(0.17*0.571429+1.5,1.665*0.571429)--(0.25*0.571429+1.5,1.625*0.571429)--(0.3*0.571429+1.5,0.3*0.571429)--(0.35*0.571429+1.5,1.575*0.571429)--(0.5*0.571429+1.5,1.5*0.571429)--(0.625*0.571429+1.5,0.4*0.571429)--(0.75*0.571429+1.5,1.375*0.571429);
\draw  (0.75*0.571429+1.5,1.375*0.571429)--(1*0.571429+1.5,1.25*0.571429)--(1.175*0.571429+1.5,0.5*0.571429)--(1.35*0.571429+1.5,1.075*0.571429);
\draw (1.35*0.571429+1.5,1.075*0.571429)--(1.5*0.571429+1.5,1*0.571429);
\draw [dotted] (0.135*0.571429+1.5,0.2*0.571429)--(0*0.571429+1.5,0*0.571429);

\draw (0*0.326531+2.3571435,1.75*0.326531)--(0*0.326531+2.3571435,0*0.326531)--(1.5*0.326531+2.3571435,0*0.326531)--(1.5*0.326531+2.3571435,1*0.326531);
\draw [dotted] (0*0.326531+2.3571435,1.75*0.326531)--(0.1*0.326531+2.3571435,1.7*0.326531);
\draw (0.1*0.326531+2.3571435,1.7*0.326531)--(0.135*0.326531+2.3571435,0.2*0.326531)--(0.17*0.326531+2.3571435,1.665*0.326531)--(0.25*0.326531+2.3571435,1.625*0.326531)--(0.3*0.326531+2.3571435,0.3*0.326531)--(0.35*0.326531+2.3571435,1.575*0.326531)--(0.5*0.326531+2.3571435,1.5*0.326531)--(0.625*0.326531+2.3571435,0.4*0.326531)--(0.75*0.326531+2.3571435,1.375*0.326531);
\draw  (0.75*0.326531+2.3571435,1.375*0.326531)--(1*0.326531+2.3571435,1.25*0.326531)--(1.175*0.326531+2.3571435,0.5*0.326531)--(1.35*0.326531+2.3571435,1.075*0.326531);
\draw (1.35*0.326531+2.3571435,1.075*0.326531)--(1.5*0.326531+2.3571435,1*0.326531);
\draw [dotted] (0.135*0.326531+2.3571435,0.2*0.326531)--(0*0.326531+2.3571435,0*0.326531);

\draw (0*0.186589+2.84694,1.75*0.186589)--(0*0.186589+2.84694,0*0.186589)--(1.5*0.186589+2.84694,0*0.186589)--(1.5*0.186589+2.84694,1*0.186589);
\draw [dotted] (0*0.186589+2.84694,1.75*0.186589)--(0.1*0.186589+2.84694,1.7*0.186589);
\draw (0.1*0.186589+2.84694,1.7*0.186589)--(0.135*0.186589+2.84694,0.2*0.186589)--(0.17*0.186589+2.84694,1.665*0.186589)--(0.25*0.186589+2.84694,1.625*0.186589)--(0.3*0.186589+2.84694,0.3*0.186589)--(0.35*0.186589+2.84694,1.575*0.186589)--(0.5*0.186589+2.84694,1.5*0.186589)--(0.625*0.186589+2.84694,0.4*0.186589)--(0.75*0.186589+2.84694,1.375*0.186589);
\draw  (0.75*0.186589+2.84694,1.375*0.186589)--(1*0.186589+2.84694,1.25*0.186589)--(1.175*0.186589+2.84694,0.5*0.186589)--(1.35*0.186589+2.84694,1.075*0.186589);
\draw (1.35*0.186589+2.84694,1.075*0.186589)--(1.5*0.186589+2.84694,1*0.186589);

\draw (0,0)--(3.5,0);
\draw [dotted] (1.5*0.186589+2.84694,1*0.186589)--(3.5,0);
\draw [fill] (3.5,0) circle [radius=0.015];
\node at (3.6,-0.05) {$x_0$};

\node at (0.7,-0.2) {$K_1$};
\node at (1.9,-0.2) {$K_2$};
\node at (2.6,-0.2) {$K_3$};
\node at (3.05,-0.2) {$K_4$};

\end{tikzpicture}
\caption{A compact set $X$, where $R(X)$ admits exactly one point of continuity (the point $x_0$), but no R-points.} \label{Only_Point_Of_Continuity}
\end{figure}
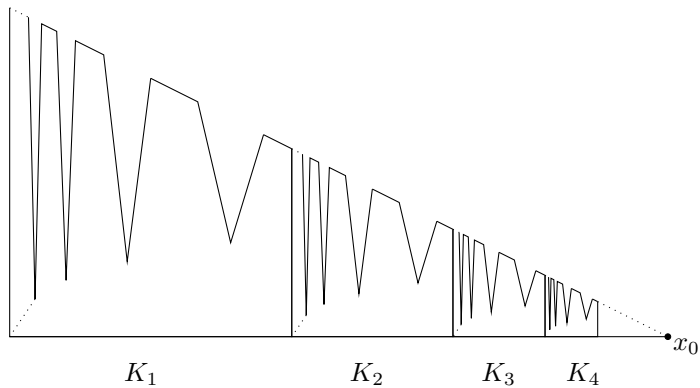

Set
\[ X = \{x_0\} \cup\left(\bigcup_{n=1}^\infty K_n \right).\]
This gives us our second example.

\begin{theorem} \label{Thm_One_Point_Of_Continuity}
Let $X$ be the compact plane set constructed above. Then $\{x_0\}$ is the only point of continuity for $R(X)$, while $R(X)$ has no R-points.
\end{theorem}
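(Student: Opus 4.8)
My plan is to run the mirror image of the proof of Theorem~\ref{Thm_One_Rpoint}, interchanging the roles of ``R-point'' and ``point of continuity''. Write $L_n$ for the accumulating (left) edge of $K_n$ and $F_n$ for the filled-in trapezoid obtained from $K_n$ by removing its slits. The crucial structural difference from Theorem~\ref{Thm_One_Rpoint} is that the edge shared by $K_n$ and $K_{n+1}$ is the \emph{non-}accumulating (right) edge of $K_n$, but it is the accumulating edge $L_{n+1}$ of $K_{n+1}$. As before, $\C\setminus X$ is connected, so $R(X)=A(X)$ by Mergelyan's theorem. I would first establish a \emph{propagation lemma}: if $f\in R(X)$ vanishes identically on $K_n$, then $f$ vanishes identically on $\bigcup_{m\le n}K_m$. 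Indeed, if $f$ vanishes on $K_k$ with $k\ge2$, then it vanishes on $L_k$, which is the right edge of $K_{k-1}$, so $f$ is constant on that edge, and the reflection-principle remark following figure~\ref{Slit_Domain_Accumulate_Left} forces $f\equiv0$ on all of $K_{k-1}$; a downward induction finishes the proof. (This runs in the opposite direction to the analogous step in Theorem~\ref{Thm_One_Rpoint}, precisely because the accumulating edge has swapped sides.)

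Everything except the continuity of $x_0$ would then follow quickly. If $x\in K_n$ and $f\in J_x$, then $f$ vanishes on a relatively open subset of $X$ containing $x$, hence on a non-empty open subset of the connected set $\tint K_n$, hence on all of $K_n$; so $h(J_x)\supseteq K_n\ne\{x\}$ and $x$ is not an R-point. If $f\in J_{x_0}$, then $f$ vanishes on $\{x_0\}\cup\bigcup_{m\ge N}K_m$ for some $N$, so by the propagation lemma $f$ vanishes on every $K_m$ and hence, by continuity, on all of $X$; thus $J_{x_0}=\{0\}$, $h(J_{x_0})=X\ne\{x_0\}$, and $x_0$ is not an R-point either. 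Hence $R(X)$ has no R-points. To see that no $y\ne x_0$ is a point of continuity, pick $n$ with $y\in K_n$ and any $z\in\tint K_{n+1}$; then $z\ne y$, and each $f\in J_z$ vanishes on $K_{n+1}$, hence (propagation lemma) on $K_n$, so $f(y)=0$. Thus $J_z\subseteq M_y$.

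The one step needing genuine work is to show that $x_0$ \emph{is} a point of continuity, i.e.\ that for every $y\ne x_0$ there is $f\in J_y$ with $f(x_0)\ne0$; I would construct this exactly as the R-point function was constructed in Theorem~\ref{Thm_One_Rpoint}. Given $y\ne x_0$, use the tapering of the $K_m$ to $x_0$ to fix $n$ so large that some open disc about $y$ meets $X$ only inside $\bigcup_{m\le n}K_m$. Set
\[ F = K_{n+1}\cup\Bigl(\bigcup_{\ell\ge n+2}F_\ell\Bigr)\cup\{x_0\}, \]
a trapezoidal ``horn'' tapering to $x_0$ and carrying the slits of $K_{n+1}$, which accumulate on its left edge $L_{n+1}$. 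The prime ends theorem of \cite[Section~2.4]{Pommerenke1992} should apply to $F$ as in Proposition~\ref{Prime_Ends}: the Riemann map $\psi$ of $\tint F$ onto the open unit disc extends continuously to $F$, is injective on $F\setminus L_{n+1}$, and is constant (with value $c$, say) on $L_{n+1}$. Put $f=\psi-c$ on $F$ and $f=0$ on $X\setminus F$. As in Theorem~\ref{Thm_One_Rpoint}, one checks $f\in A(X)=R(X)$: it is holomorphic on each component $\tint K_m$ of $\tint X$ (being $0$ for $m\le n$ and $\psi-c$ for $m\ge n+1$), and it is continuous because the two formulas agree, both giving $0$, along the shared edge $L_{n+1}$. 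Finally $f$ vanishes on $\bigcup_{m\le n}K_m$, so $f\in J_y$, while $f(x_0)=\psi(x_0)-c\ne0$ since $x_0\in F\setminus L_{n+1}$. Hence $J_y\nsubseteq M_{x_0}$, and so $x_0$ is the unique point of continuity for $R(X)$.

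The main obstacle I anticipate is justifying that the prime ends description genuinely applies to the horn $F$, which, unlike the building block of Proposition~\ref{Prime_Ends}, is built from infinitely many filled trapezoids accumulating at $x_0$. This comes down to checking that the outer boundary of $F$ is a Jordan curve — so that Carath\'eodory's theorem gives a homeomorphic boundary extension before the slits are cut, and the slits, accumulating on the whole of $L_{n+1}$, then collapse $L_{n+1}$ to a single boundary point — and this holds because the trapezoids $F_\ell$ taper to $x_0$. The remaining matters, namely verifying $f\in R(X)$ and dealing with $y$ lying on a slit boundary or on an edge shared by two blocks (absorbed by enlarging $n$), are routine.
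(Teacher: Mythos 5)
Your proposal is correct and follows essentially the same route as the paper's proof: Mergelyan's theorem to get $R(X)=A(X)$, downward propagation of vanishing from $K_k$ to $K_{k-1}$ via the reflection-principle remark, and the function $f=\psi-c$ built from the prime-end Riemann map on a tail set $F$ (slitted block plus filled trapezoids plus $x_0$, with degenerate right edge) to show $x_0$ is a point of continuity. The only differences are bookkeeping: the paper starts $F$ at $K_{n+2}$ rather than enlarging $n$, and it treats the membership $f\in A(X)$ and the degenerate-edge prime-end statement as routine, exactly the points you flag and justify in slightly more detail.
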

\begin{proof}
For each $n\in \N$, let $L_n$ be the line segment forming the left edge of $K_n$, and let $F_n$ be the trapezoid-shaped compact domain obtained by filling the slits in $K_n$.

Since $\C\setminus X$ is connected, $R(X) = A(X)$ by Mergelyan's theorem. Let $x\in X$, and let $f\in J_x$. Then $f$ vanishes on an open neighbourhood of a point in some $K_n$.  This forces $f$ to vanish on all $K_m$ with $m\leq n$, and shows that $J_x\subseteq M_y$ for all $y\in \bigcup_{\ell = 1}^n K_\ell$. Thus $x$ is not an R-point. Let $x\in K_m$ for some $m$, and let $y\in K_n$ for some $n>m$. If $f\in J_y$, then $f$ vanishes identically on all $K_\ell$ for $\ell\leq n$. This forces $f\in M_x$, which shows that $x$ is not a point of continuity. Next we show that $x_0$ is a point of continuity for $R(X)$. Let $y\in X\setminus \{x_0\}$, then $y\in K_n$ for some $n$. Set
\[ F = K_{n+2}\cup \left( \bigcup_{\ell=n+3}^\infty F_\ell \right) \cup \{x_0\},\] then $F$ is the closure of a slit domain, where the slits accumulate on the left edge $L_{n+2}$, but where the right edge of $F$ is degenerate (this does not affect the properties that we need here). The Riemann map $\psi$ from $\tint F$ onto the open unit disc has a continuous extension from $F$ onto the closed unit disc. We denote the extended function by $\psi$ again, and notice that $\psi(z)\neq \psi(w)$ for $z\in F\setminus L_{n+2}$, and $w\in F\setminus \{z\}$. As before, $\psi$ is constant on $L_{n+2}$. We denote this constant value by $c$, and set
\[ f(z) = \begin{cases}
\psi(z)-c & \text{if }z\in F, \\
0 & \text{if } z\in X\setminus F.\\ \end{cases}
\] Then it is clear that the restriction of $f$ to $X$ is in $A(X)=R(X)$ and $f\in J_y\setminus M_{x_0}$. 
This shows that $J_y\nsubseteq M_{x_0}$, and thus $x_0$ is a point of continuity. By above, $x_0$ is the only point of continuity for $R(X)$.
\end{proof}

\section{Trivial Jensen measures without regularity}

In \cite{2001Feinstein}, the first author constructed a Swiss cheese set $X$ such that $R(X)$ has no non-trivial Jensen measures, but $R(X)$ is not regular. In this section we give a new way to construct such sets $X$. Our new approach also allows us to obtain some additional properties.

We begin with a lemma concerning representing measures for $R(X)$.

\begin{lemma} \label{lemma_4_1}
Let $X$ be a compact plane set, let $F$ be a non-empty closed subset of $X$, and let $x \in F$. Suppose that no bounded component of $\C \setminus F$ is contained in $X$, and there exists a non-trivial representing measure $\mu$ for $x$ with respect to $R(X)$, whose closed support is contained in $F$. Then $\mu$ is also a non-trivial representing measure for $x$ with respect to $R(F)$, and $R(F)\neq C(F)$.
\end{lemma}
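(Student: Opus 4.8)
The plan is to show first that $\mu$ is a representing measure for $x$ with respect to $R(F)$, and then deduce $R(F) \neq C(F)$. The key point is that $R(X)$ restricted to $F$ sits inside $R(F)$, but we need the reverse direction: every function in $R(F)$ can be approximated, on the closed support of $\mu$, by functions that $\mu$ already integrates correctly.

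First I would recall why the hypothesis ``no bounded component of $\C\setminus F$ is contained in $X$'' is exactly what is needed. Let $g$ be a rational function with poles off $F$; I want to show $\int_X g\,{\rm d}\mu = g(x)$ (note the integral makes sense because $\operatorname{supp}\mu \subseteq F$, where $g$ is defined and continuous). Each pole $a$ of $g$ lies in $\C\setminus F$, hence in some component $V$ of $\C\setminus F$. If $V$ is unbounded, then $1/(z-a)$ — and more generally the principal part of $g$ at $a$ — can be uniformly approximated on $F$ by polynomials (Runge's theorem, using that $V$ meets $\infty$), so those terms are limits of elements of $P(X)\subseteq R(X)$ on $F\supseteq\operatorname{supp}\mu$. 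If $V$ is bounded, then by hypothesis $V$ is not contained in $X$, so there is a point $b\in V\setminus X$; then $1/(z-a)$ can be uniformly approximated on $F$ by rational functions with their only pole at $b$ (again Runge, sliding the pole within the connected set $V$), and such functions have no pole on $X$, hence lie in $R(X)$. Decomposing $g$ into its polynomial part plus principal parts at each pole, I conclude $g$ is, uniformly on $F$, a limit of elements of $R(X)$. Since $\mu$ represents $x$ on $R(X)$ and $\|\mu\|=1$, passing to the limit gives $\int g\,{\rm d}\mu = g(x)$. Taking uniform limits on $F$ once more extends this to all of $R(F)$, so $\mu$ is a representing measure for $x$ with respect to $R(F)$; it is non-trivial because it was non-trivial as a measure on $X$ and its support lies in $F$.

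Finally, to see $R(F)\neq C(F)$: if we had $R(F)=C(F)$, then $\mu$ would be a representing measure for the point evaluation $x$ on all of $C(F)$. But on $C(F)$ the only representing measure for $x$ is the point mass $\delta_x$ — one can test against a function in $C(F)$ that peaks at $x$, or use that $\int |h|^2\,{\rm d}\mu = |h(x)|^2$ for all $h\in C(F)$ forces $\mu=\delta_x$ — contradicting non-triviality. Hence $R(F)\neq C(F)$.

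The main obstacle is the Runge-type approximation step: justifying carefully that the principal part of $g$ at a pole $a$ lying in a bounded component $V$ of $\C\setminus F$ can be approximated uniformly on $F$ by rational functions whose poles avoid $X$. This requires choosing the auxiliary pole $b$ in $V\setminus X$ (possible precisely by the standing hypothesis) and invoking the classical fact that poles may be moved freely within a connected component of the complement; once this is in place, the rest is routine limiting using $\|\mu\|=1$.
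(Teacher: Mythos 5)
Your proposal is correct and follows essentially the same route as the paper: use Runge's theorem (which you unpack via partial fractions and pole-pushing, with the hypothesis on bounded components of $\C\setminus F$ supplying an admissible pole $b\in V\setminus X$) to show that restrictions of $R(X)$ functions are dense in $R(F)$ uniformly on $F$, pass to the limit to see $\mu$ represents $x$ with respect to $R(F)$, and conclude $R(F)\neq C(F)$ because $C(F)$ admits no non-trivial representing measures. The only difference is that you spell out the details the paper leaves to the cited version of Runge's theorem and to the standard fact about $C(F)$.
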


\begin{proof}
Since  no bounded component of $\C \setminus F$ is contained in $X$, the algebra of functions that are restrictions of functions in $R(X)$ to $F$ is dense in $R(F)$ by Runge's theorem \cite[p. 28]{gamelin1984}.  It follows easily that $\mu$ is also a non-trivial representing measure for $x$ with respect to $R(F)$. Since $C(F)$ has no non-trivial representing measures, we must have $R(F)\neq C(F)$.
\end{proof}

We remark that the compact subset $F$ mentioned in the above lemma must have positive area (by the Hartogs-Rosenthal theorem \cite[p. 161]{browder1969}), and $F$ cannot have empty interior and connected complement in $\C$ (by Mergelyan's theorem, or Lavrentiev's theorem \cite[p. 48]{gamelin1984}). We also remark that in the above lemma, the condition that no bounded component of $\C\setminus F$ be contained in $X$ cannot be omitted. To see this, consider the disc algebra, which is equal to $R(\overline\Delta)$, where $\Delta$ is the open unit disc. Let $F$ be the union of the unit circle and the origin. Then there is a non-trivial representing measure $\mu$ supported on $F$ for the origin with respect to $R(\overline\Delta)$, while $R(F)=C(F)$ by the Hartogs--Rosenthal theorem. Of course this measure $\mu$ is \emph{not} a representing measure with respect to $R(F)$.




\medskip

In our construction below, we will need a lemma of McKissick. The following version of McKissick's lemma was given by K\"orner in \cite{korner1986cheaper}.

\begin{lemma} \label{MK_Lemma}
Let $D$ be an open disc in $\C$ and let $\varepsilon >0$. Then there is a sequence $\Delta_k (k\in \N)$ of pairwise disjoint open discs with each $\Delta_k\subseteq D$ such that the sum of the radii of the $\Delta_k$ is less than $\varepsilon$ and such that, setting $U=\bigcup_{k\in \N}\Delta_k$, there is a sequence $f_n$ of rational functions with poles only in $U$ and such that $f_n$ converges uniformly on $\C\setminus U$ to a function $F$ such that $F(z)=0$ for all $z\in \C\setminus D$ while $F(z)\neq 0$ for all $z\in D\setminus U$.
\end{lemma}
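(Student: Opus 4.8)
The plan is to build $F$ as a uniformly convergent infinite product of explicit rational ``step functions'' whose poles accumulate onto $\partial D$; note that infinitely many pole-discs are genuinely needed here, since a function holomorphic on a full neighbourhood of $\partial D$ and vanishing on its outer part would vanish identically there by the identity theorem. By an affine change of variables we may assume $D=\Delta$, the open unit disc. For $m\in\N$ put $\alpha_m=1-2^{-m}$, $\beta_m=1-2^{-m-1}$ and $\gamma_m=1-\tfrac34 2^{-m}$, so that $\alpha_m<\gamma_m<\beta_m=\alpha_{m+1}<1$ and the closed annuli $\{\alpha_m\le|z|\le\beta_m\}$ cover $\{\tfrac12\le|z|<1\}$. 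The basic building block is
\[ g_m(z)=\frac{1}{1-(z/\gamma_m)^{N_m}}, \]
which has no zeros and has exactly $N_m$ poles, all on $\{|z|=\gamma_m\}$ (the points $\gamma_m e^{2\pi ik/N_m}$). From $|g_m(z)-1|\le\frac{(|z|/\gamma_m)^{N_m}}{1-(|z|/\gamma_m)^{N_m}}$ (for $|z|<\gamma_m$) and $|g_m(z)|\le\frac{1}{(|z|/\gamma_m)^{N_m}-1}$ (for $|z|>\gamma_m$), I see that choosing $N_m$ large enough arranges $|g_m-1|<2^{-m}$ on $\{|z|\le\alpha_m\}$ and $|g_m|<2^{-m}$ on $\{|z|\ge\beta_m\}$; thus $g_m$ behaves, away from $\{|z|=\gamma_m\}$, like the indicator of $\{|z|<\gamma_m\}$.

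Having fixed such $N_m$, I enclose each pole $\gamma_m e^{2\pi ik/N_m}$ in an open disc of radius $\rho_m$, taking $\rho_m>0$ small enough that all these discs (over all $m,k$) are pairwise disjoint and contained in $\{\alpha_m<|z|<\beta_m\}\subseteq\Delta$, and that $\sum_m N_m\rho_m<\varepsilon$, but not wastefully small — say $\rho_m\asymp\varepsilon 2^{-m}/N_m$ — so that $M_m:=\sup\{|g_m(z)|:z\notin U\}$, which by the maximum principle is finite and attained on the boundaries of the level-$m$ discs, grows only geometrically in $m$. Let $(\Delta_k)_{k\in\N}$ enumerate all these discs, $U=\bigcup_k\Delta_k$, and $f_n=\prod_{m=1}^n g_m$. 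Then the $\Delta_k$ are pairwise disjoint, each $\Delta_k\subseteq D$, the sum of their radii is $\sum_m N_m\rho_m<\varepsilon$, and each $f_n$ is rational with poles only in $U$.

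Set $F=\lim_n f_n$ wherever this exists. If $|z|\ge1$ then $|z|>\beta_m$ for every $m$, so $|f_n(z)|\le\prod_{m=1}^n 2^{-m}=2^{-n(n+1)/2}\to0$; hence $F\equiv0$ on $\C\setminus\Delta$, with $f_n\to F$ uniformly there. If $z\in\Delta\setminus U$, let $m_0$ be the least $m$ with $|z|\le\alpha_m$; then $|g_m(z)-1|<2^{-m}$ for all $m\ge m_0$ (so $g_m(z)\ne0$), while $g_m(z)\ne0$ also for $m<m_0$ since $g_m$ is zero-free. As $\sum_{m\ge m_0}|g_m(z)-1|\le\sum_m 2^{-m}<\infty$, the product $\prod_m g_m(z)$ converges to a nonzero value, so $f_n(z)$ converges and $F(z)=\prod_m g_m(z)\ne0$.

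The last and, I expect, most delicate step is uniform convergence of $f_n$ on all of $\C\setminus U$, the trouble being near $\partial\Delta$, where $F$ must take arbitrarily small nonzero values. Here one uses that if $\alpha_k<|z|<\beta_k$ then $|g_m(z)|<2^{-m}$ for every $m<k$ and $|g_m(z)-1|<2^{-m}$ for every $m>k$, with only the single factor $g_k(z)$ uncontrolled beyond $|g_k(z)|\le M_k$; hence $|f_n(z)|\le M_k\prod_{m<k}2^{-m}\prod_{m>k}(1+2^{-m})=O\!\big(M_k\,2^{-k(k-1)/2}\big)$, which tends to $0$ uniformly over such $z$ as $k\to\infty$ because the double-exponential decay swamps the geometric growth of $M_k$ (and likewise $|F(z)|$ is uniformly small there). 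Splitting $\C\setminus U$ into $\{|z|\ge1\}$, the bands $\{\alpha_k<|z|<\beta_k\}\setminus U$ (controlled by the decay just noted, together with the fact that on each fixed band $\prod_{m>n}g_m$ converges uniformly since $\sum 2^{-m}<\infty$), and $\{|z|\le\tfrac12\}\setminus U$ (a routine infinite-product tail estimate), we obtain uniform convergence of $f_n$ to $F$ on $\C\setminus U$. Undoing the affine change of variables completes the proof.
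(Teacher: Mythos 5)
The paper does not prove this lemma at all: it quotes K\"orner's version of McKissick's lemma from \emph{A cheaper Swiss cheese}, so the only fair comparison is with that cited argument, and your construction is essentially the same one — zero-free rational factors with equally spaced poles on circles $|z|=\gamma_m$ accumulating at $\partial D$, small discs excised around the poles with small total radius, and an infinite product that telescopes to $0$ off $D$ and to a nonzero value on $D\setminus U$. Your band decomposition, the choice of $N_m$, the pointwise nonvanishing on $D\setminus U$, and the uniform-convergence argument (uniform smallness of both $f_n$ and $F$ on bands of large index, uniform convergence of the tails elsewhere) are all sound.

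The one step you assert rather than prove is the quantitatively decisive one: that with $\rho_m\asymp\varepsilon 2^{-m}/N_m$ the quantity $M_m=\sup\{|g_m(z)|:z\notin U\}$ grows only geometrically in $m$. This is exactly the point on which McKissick's lemma turns (it is what allows the excised radii to have small sum while the functions stay controlled off $U$), so it should be justified, although a few lines suffice. By the maximum principle, applied to $g_m$ on the complement in the Riemann sphere of the closed level-$m$ discs (where $g_m$ is holomorphic and vanishes at $\infty$), it is enough to bound $|g_m|$ on the circles $|z-p|=\rho_m$, $p=\gamma_m e^{2\pi ik/N_m}$. Writing $z=p(1+u)$, so that $|u|=\rho_m/\gamma_m$, one gets $1-(z/\gamma_m)^{N_m}=1-(1+u)^{N_m}$, and expanding the binomial gives $\bigl|(1+u)^{N_m}-1\bigr|\ge N_m|u|\bigl(1-(e^{N_m|u|}-1)\bigr)\ge\tfrac13 N_m|u|$ whenever $N_m\rho_m/\gamma_m\le\tfrac12$; hence $M_m\le 3\gamma_m/(N_m\rho_m)=O(2^m/\varepsilon)$ as required. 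Note also that $\rho_m$ is subject simultaneously to upper bounds (disjointness, containment in the band $\alpha_m<|z|<\beta_m$, and $\sum_m N_m\rho_m<\varepsilon$) and to this lower bound; a choice such as $\rho_m=c\min(\varepsilon,1)2^{-m}/N_m$ with a small absolute constant $c$ meets all of them, so your ``$\asymp$'' can be made honest. With this estimate inserted, your proof is complete and is, in substance, the cited one.
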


We shall also require the following lemma of Denjoy (\cite{Denjoy1921}) and Carleman (\cite{Carleman1922}). The original lemma is stated for closed intervals in $\R$, but there is no difficulty in replacing a closed interval by a line segment in $\C$. Note that we define derivatives here as limits of quotients using points in the line segment, and all line segments in this note are not degenerate.

\begin{lemma} \label{DC_Lemma}
Let $f$ be an infinitely differentiable function on a line segment $I$ in $\C$ such that
\begin{equation} \label{DC_Condition} \sum_{k=1}^\infty \frac 1{\abs{f^{(k)}}_I^{1/k}}=\infty.\end{equation}
Suppose that there is an $x\in I$ such that $f^{(k)}(x)=0$ for all $k\geq 0$. Then $f$ is constantly $0$ on $I$.
\end{lemma}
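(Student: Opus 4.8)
This is the classical Denjoy--Carleman theorem on quasi-analytic classes, so in the paper it is perfectly legitimate simply to quote it from \cite{Denjoy1921,Carleman1922}; here is the shape of a proof. Put $Z=\{\,t\in I:f^{(k)}(t)=0\text{ for all }k\ge 0\,\}$. This set is closed, by continuity of the $f^{(k)}$, and non-empty, so --- $I$ being a line segment, hence connected --- it suffices to prove that $Z$ is relatively open in $I$: then $Z=I$ and in particular $f=f^{(0)}\equiv 0$. Fix $p\in Z$. A translation, together with a reflection to deal with the side of $p$ below $p$, reduces the local problem to the following: \emph{if $g\in C^\infty[0,a]$ satisfies $g^{(k)}(0)=0$ for all $k$ and $M_k:=\abs{g^{(k)}}_{[0,a]}$ satisfies $\sum_k M_k^{-1/k}=\infty$, then $g\equiv 0$ on some $[0,\varepsilon]$}; once this is known, $f\equiv 0$ on a neighbourhood of $p$ in $I$, hence $f^{(k)}\equiv 0$ there for every $k$, so a neighbourhood of $p$ lies in $Z$. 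Here we used that \eqref{DC_Condition} is inherited by every sub-segment, on which the derivatives are no larger. The subcase $\liminf_k M_k^{1/k}/k<\infty$ is elementary: Taylor's theorem with remainder, together with the flatness of $g$ at $0$, gives $\abs{g(x)}\le \frac{M_k}{k!}x^k$ for all $k$ and all $x\in[0,a]$, and (via Stirling's formula) the $\liminf$ hypothesis then forces $\inf_k\frac{M_k}{k!}x^k=0$ for all $x$ below a positive threshold, so $g$ vanishes near $0$. This disposes of the real-analytic classes; in the remaining subcase $M_k$ grows faster than $k!$.

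For that subcase one runs the classical complex-variable argument. Let $T(r)=\sup_k r^k/M_k$ be the associated function of the sequence $(M_k)$; a standard fact is that \eqref{DC_Condition} is equivalent to $\int_1^\infty r^{-2}\log T(r)\,dr=\infty$. From $g$ one manufactures an entire function $\Phi$ --- a Fourier--Laplace transform of a suitable smooth, effectively compactly supported, modification of $g$ --- which is bounded on a half-plane, so that there $\log\abs{\Phi}$ is bounded above, and whose restriction to the boundary line decays at least as fast as $1/T(\abs{y})$; the decay comes from integrating by parts repeatedly (each step gaining a factor $\lesssim M_k/\abs{y}^k$) and then optimising over the number of steps. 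Such decay makes $\log\abs{\Phi}$ fail to be integrable against $(1+y^2)^{-1}\,dy$ on the boundary line; but a nonzero bounded holomorphic function on a half-plane cannot do this --- this is the half-plane analogue of the fact that a nonzero $H^\infty$ function on the unit disc has log-integrable boundary modulus. Hence $\Phi\equiv 0$, and since $\Phi$ determines the function it was built from (its Taylor coefficients at the origin being essentially the moments of that function), the Weierstrass approximation theorem forces $g\equiv 0$, as required.

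The step I would expect to be the real obstacle is producing $\Phi$ with the stated decay. The naive candidate $\Phi(z)=\int_0^a g(t)e^{-zt}\,dt$ does not suffice, because the integrations by parts leave boundary contributions at the endpoint $a$, where $g$ need not be flat, and those contributions by themselves already limit the decay to order $1/\abs{y}$. Getting around this --- in effect, replacing $g$ by a genuinely compactly supported representative without leaving the Carleman class --- together with the precise equivalence between the series condition \eqref{DC_Condition} and the logarithmic-integral condition, is the technical core of the Denjoy--Carleman theorem, which it is entirely reasonable to quote rather than reprove here.
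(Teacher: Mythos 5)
Your instinct to quote the classical Denjoy--Carleman theorem is exactly what the paper does: it gives no proof of this lemma at all, citing Denjoy and Carleman and remarking only that passing from a closed interval in $\R$ to a line segment in $\C$ is harmless. Your sketch of the classical argument (closed-and-open reduction, Taylor estimate in the case $\liminf_k \abs{f^{(k)}}_I^{1/k}/k<\infty$, and the half-plane/log-integrability argument otherwise) is essentially sound, with one overstatement worth noting: condition \eqref{DC_Condition} is \emph{not} equivalent to $\int_1^\infty r^{-2}\log T(r)\,dr=\infty$; the precise Denjoy--Carleman characterisation of quasi-analyticity uses the regularised sequence $\lambda_k=\inf_{j\ge k}\abs{f^{(j)}}_I^{1/j}$ (equivalently the log-convex minorant), and \eqref{DC_Condition} is strictly stronger in general. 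Since $\lambda_k\le \abs{f^{(k)}}_I^{1/k}$, divergence of the series in \eqref{DC_Condition} does imply divergence of $\sum_k 1/\lambda_k$ and hence of the logarithmic integral, which is the only direction the lemma needs, so the slip does not affect the argument.
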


We also need the following well-known result concerning derivatives of rational functions. (For an explicit proof of this estimate in the case of the first derivative, see \cite[Theorem~3.2]{1991Feinstein}.)
\begin{lemma} \label{Cauchy_Estimate}
Let $D_n$ be a sequence of open discs in $\C$ (not necessarily pairwise disjoint), and set $X=\overline \Delta\setminus \cup_{n=1}^\infty D_n$. Suppose that $z\in X$. Let $s_n$ denote the distance from $D_n$ to $z$ and $r_n$ the radius of $D_n$. We also set $r_0=1$ and $s_0=1-\abs{z}$. Suppose that $s_n>0$ for all $n$. Then, for all $f\in R_0(X)$ and $k\geq 0$, we have
\[ \abs{f^{(k)}(z)}\leq k!\sum_{j=0}^\infty \frac{r_j}{s_j^{k+1}}\abs{f}_X. \]
\end{lemma}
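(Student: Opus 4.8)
The plan is to realise $f^{(k)}(z)$ as a Cauchy integral over the boundary of a finite ``truncation'' of $X$, to estimate the contributions of $\partial\Delta$ and of each $\partial D_n$ separately by (length)$\,\times\,$(sup of the integrand), and then to let the truncation exhaust $X$; the point of carrying out the estimate on the truncations and only passing to the supremum norm over $X$ at the very end is precisely what forces the constant to come out as $\abs{f}_X$ rather than something larger. Since $f\in R_0(X)$ is a rational function with no poles on $X$, it has only finitely many poles, and every pole of $f$ lying in $\overline\Delta$ must lie in $\bigcup_n D_n$ (because $\overline\Delta\setminus\bigcup_n D_n=X$). First I would fix $N_0$ so large that every pole of $f$ in $\overline\Delta$ lies in $D_1\cup\dots\cup D_{N_0}$, and for $N\ge N_0$ set
\[ X_N=\overline\Delta\setminus\bigcup_{n=1}^N D_n .\]
Then each $X_N$ is compact, $X_{N_0}\supseteq X_{N_0+1}\supseteq\cdots$ with $\bigcap_{N\ge N_0}X_N=X$, and $f$ is holomorphic on a neighbourhood of each $X_N$. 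Because $s_j>0$ for every $j$, we have $\abs z<1$ and $z\notin\overline{D_n}$ for all $n$, so $z$ lies in the interior of each $X_N$ (indeed $z\in\Delta\setminus\bigcup_{n\le N}\overline{D_n}\subseteq\tint X_N$).

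Next, fixing such an $N$, let $W$ be the connected component of $\tint X_N$ containing $z$. Then $W$ is a bounded domain whose boundary consists of finitely many circular arcs (arcs of $\partial\Delta$ and of $\partial D_1,\dots,\partial D_N$), hence is rectifiable, and $f$ is holomorphic on a neighbourhood of $\overline W\subseteq X_N$. Cauchy's integral formula for derivatives then gives
\[ f^{(k)}(z)=\frac{k!}{2\pi i}\int_{\partial W}\frac{f(\zeta)}{(\zeta-z)^{k+1}}\,{\rm d}\zeta .\]
On $\partial W\subseteq X_N$ we have $\abs{f(\zeta)}\le\abs{f}_{X_N}$, and $\partial W$ meets each of the circles $\partial\Delta,\partial D_1,\dots,\partial D_N$ in a union of subarcs (and meets distinct circles only in finitely many points), so
\[ \abs{f^{(k)}(z)}\le\frac{k!}{2\pi}\,\abs{f}_{X_N}\left(\int_{\partial\Delta}\frac{\abs{{\rm d}\zeta}}{\abs{\zeta-z}^{k+1}}+\sum_{n=1}^N\int_{\partial D_n}\frac{\abs{{\rm d}\zeta}}{\abs{\zeta-z}^{k+1}}\right).\]
On $\partial\Delta$ one has $\abs{\zeta-z}\ge 1-\abs z=s_0$ and the circle has length $2\pi=2\pi r_0$; on $\partial D_n$ one has $\abs{\zeta-z}\ge\dist(z,D_n)=s_n$ and the circle has length $2\pi r_n$. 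Hence
\[ \abs{f^{(k)}(z)}\le k!\,\abs{f}_{X_N}\left(\frac{r_0}{s_0^{k+1}}+\sum_{n=1}^N\frac{r_n}{s_n^{k+1}}\right).\]

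Finally I would let $N\to\infty$. The partial sums $\sum_{n=1}^N r_n/s_n^{k+1}$ increase to $\sum_{n=1}^\infty r_n/s_n^{k+1}\in[0,\infty]$, so it remains only to check that $\abs{f}_{X_N}\to\abs{f}_X$. The numbers $\abs{f}_{X_N}$ are non-increasing and bounded below by $\abs{f}_X$, hence converge to some $L\ge\abs{f}_X$; choosing $\zeta_N\in X_N$ with $\abs{f(\zeta_N)}\to L$ and passing to a subsequence with $\zeta_{N_j}\to\zeta^*$, one gets $\zeta^*\in\bigcap_N X_N=X$ and, by continuity of $f$, $\abs{f(\zeta^*)}=L$, so $L\le\abs{f}_X$ and therefore $L=\abs{f}_X$. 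Passing to the limit in the last displayed inequality yields $\abs{f^{(k)}(z)}\le k!\sum_{j=0}^\infty (r_j/s_j^{k+1})\abs{f}_X$, as required.

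The step I expect to require the most care is the justification of Cauchy's formula on the finitely connected domain $W$ whose boundary is a union of circular arcs, together with the bookkeeping that reduces the integral over $\partial W$ to the sum of the integrals over the full circles $\partial\Delta$ and $\partial D_n$ (this is where the hypothesis that the discs need not be disjoint has to be handled honestly). Both are classical: the Cauchy formula can be obtained by excising a small disc about $z$ and applying Cauchy's theorem, or by first perturbing the centres of $D_1,\dots,D_N$ so that the relevant closed discs become pairwise disjoint and disjoint from $\partial\Delta$ and then letting the perturbation vanish. The compactness argument for $\abs{f}_{X_N}\to\abs{f}_X$ is routine.
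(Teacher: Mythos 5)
Your proof is correct. Note that the paper itself does not prove this lemma: it is stated as a well-known estimate, with a pointer to \cite[Theorem~3.2]{1991Feinstein} only for the first-derivative case, so your argument supplies what the paper omits. Your exhaustion scheme deals properly with the two genuine delicacies. First, since the $D_n$ may overlap, one cannot simply write $f^{(k)}(z)$ as the integral over $\partial\Delta$ minus the sum of integrals over all the full circles $\partial D_n$ (a pole lying in two removed discs would then be encircled with non-zero total winding number); integrating instead over $\partial W$, where $W$ is the component of $\tint X_N$ containing $z$, and only afterwards majorising $\abs{\int_{\partial W}}$ by the integrals over the full circles, sidesteps this, and over-counting arcs shared by coincident or intersecting circles only enlarges the right-hand side. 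Second, the bound must involve $\abs{f}_X$ rather than $\abs{f}_{X_N}$, and your nested-compact-sets argument ($\abs{f}_{X_N}\downarrow\abs{f}_X$ because $X_N\downarrow X$ and $f$ is continuous on $X_{N_0}$) settles this correctly; when $\sum_j r_j/s_j^{k+1}=\infty$ the claim is vacuous, as you implicitly note. The one step needing care is exactly the one you flag: the validity of Cauchy's formula for the finitely connected domain $W$ with piecewise-circular boundary, each boundary arc traversed once (orientation being irrelevant once absolute values are taken). Since $W$ lies in $\Delta\setminus\bigcup_{n\le N}\overline{D_n}$ and meets each removed disc from outside only, $\partial W$ has no slit arcs, and the classical justifications you sketch (excision plus Cauchy's theorem, or perturbing to pairwise disjoint discs and passing to the limit) are adequate.
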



Let $K$ be a compact subset of $\Delta$. Our new approach allows us to construct Swiss cheese sets $X\supseteq K$ such that $R(X)$ has the kind of properties we want. Usually we will be interested in sets $K$ which contain some line segments.

\begin{theorem} \label{JY_Theorem}
Let $K$ be a compact subset of $\Delta$. Then there is a Swiss cheese set $X$ with $K\subseteq X$ such that every point of $X\setminus K$ is both a point of continuity and an R-point for $R(X)$, but such that for every closed  line segment $I$ contained in $K$, every $f\in R(X)$ is infinitely differentiable on $I$ and satisfies condition \eqref{DC_Condition}.
\end{theorem}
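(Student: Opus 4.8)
The plan is to build $X$ as a nested intersection $X=\bigcap_{n} X_n$, where $X_0=\overline\Delta$ and each $X_n$ is obtained from $X_{n-1}$ by deleting a suitable countable union of small open discs, so that in the end $\overline\Delta\setminus X$ is a countable union of open discs (making $X$ a Swiss cheese set) and $K\subseteq X$. There are two competing requirements to arrange simultaneously. First, for every point $w\in X\setminus K$ we want a function in $R(X)$ peaking at $w$ and vanishing off a small neighbourhood, so that $w$ is both a point of continuity and an R-point; this is exactly the situation handled by McKissick's lemma (Lemma~\ref{MK_Lemma}), applied in a small disc $D$ around $w$ disjoint from $K$. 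Second, along every closed line segment $I\subseteq K$ we need a quantitative derivative bound forcing condition \eqref{DC_Condition}: by Lemma~\ref{Cauchy_Estimate}, if the deleted discs $D_j$ have radii $r_j$ and distance $s_j$ from a point $z\in I$, then $\abs{f^{(k)}(z)}\le k!\sum_j (r_j/s_j^{k+1})\,\abs f_X$, so it suffices to control $\sum_j r_j/s_j^{k+1}$ uniformly over $z\in I$ and show the resulting bound on $\abs{f^{(k)}}_I$ grows slowly enough (e.g.\ like $C^k k!$ or $C^k (k!)^2$), which makes $\sum_k \abs{f^{(k)}}_I^{-1/k}$ diverge. (Infinite differentiability of $f$ on $I$ follows from the same estimates: the formal derivatives of the approximating rational functions converge uniformly on $I$.)

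The key steps, in order, would be: (1) Enumerate a countable dense-enough family of pairs $(w,D)$ with $w\in X\setminus K$ to be ``treated'' — in practice one works with a countable base of open discs disjoint from $\overline K$ whose union covers $\Delta\setminus K$, handling them one at a time. (2) At stage $n$, for the $n$-th disc $D$ apply McKissick's lemma \emph{inside} $D$ to get pairwise disjoint discs $\Delta_k\subseteq D$ with $\sum_k \mathrm{rad}(\Delta_k)$ as small as we please and an associated $R(X)$-function vanishing outside $D$ and nonvanishing on $D\setminus\bigcup_k\Delta_k$. (3) Crucially, shrink these radii so small — depending on the distances from the already-fixed line segments in $K$ and on a summable ``budget'' $\varepsilon_n$ — that their contribution to $\sum_j r_j/s_j^{k+1}$ is negligible; since the discs lie in $D$, which is a positive distance $d_n>0$ from any fixed segment $I$ and from $\overline K$, we have $s_j\ge d_n$ for these discs, so their total contribution at derivative order $k$ is at most $(\sum_k \mathrm{rad}\Delta_k)/d_n^{k+1}$, and choosing $\sum_k\mathrm{rad}\Delta_k \le \varepsilon_n d_n^{?}$ appropriately (with enough room to absorb all orders $k$) keeps the grand total finite with a controlled growth rate. (4) Set $X=\overline\Delta\setminus\bigcup_n(\text{all deleted discs})$; verify $\sum(\text{all radii})<\infty$ so $X$ is a genuine (non-degenerate) Swiss cheese set, and $K\subseteq X$ since every deleted disc avoids $\overline K$. (5) Read off: each $w\in X\setminus K$ lies in some treated $D$ and, by the peaking function there (restricted to $X$, which lies in $\C\setminus U$ where the McKissick function is a uniform limit of rational functions with poles in $U$), one gets the separating/urysohn-type function showing $w$ is a point of continuity and an R-point. (6) For a closed segment $I\subseteq K$, apply Lemma~\ref{Cauchy_Estimate} at each $z\in I$: the sum $\sum_j r_j/s_j^{k+1}$ is dominated by the sum of the per-stage budgets, which we have pre-arranged to be bounded by something like $M^{k}k!$ or $M^{k}(k!)^2$; hence $\abs{f^{(k)}}_I\le M^k(k!)^2\abs f_X$ (say), and $\sum_k \abs{f^{(k)}}_I^{-1/k}\ge \sum_k (M^{-1}(k!)^{-2/k}\abs f_X^{-1/k})=\infty$ since $(k!)^{1/k}\sim k/e$ gives terms $\asymp 1/k$.

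The main obstacle — and the place where the construction must be set up carefully — is the tension in step (3): McKissick's lemma lets the \emph{sum of radii} be arbitrarily small, but it gives no control on the \emph{distances} $s_j$ from the new discs to a point $z\in I$ lying in the \emph{same} stage's or a later stage's neighbourhood; however, since $I\subseteq K$ and every deleted disc is chosen disjoint from a fixed open neighbourhood of $\overline K$, the distance from any deleted disc to $I$ is bounded below by a constant $d>0$ depending only on that neighbourhood, not on the stage. That single uniform lower bound $s_j\ge d$ for \emph{all} deleted discs is what makes $\sum_j r_j/s_j^{k+1}\le d^{-(k+1)}\sum_j r_j<\infty$ for every $k$, and then a more careful stagewise choice of radii (making $\sum_{\text{stage }n}r_j$ decay fast enough) upgrades this to the $C^k k!$-type growth needed for \eqref{DC_Condition}. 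So the real work is bookkeeping: fixing, before stage $n$, how small stage-$n$'s total radius must be as a function of $d$ and of the target growth sequence, and checking the three conclusions (Swiss cheese, regularity off $K$, Denjoy--Carleman condition on segments in $K$) all survive. Infinite differentiability on $I$ is then immediate from the uniform convergence of the termwise-differentiated rational approximants, which the same Cauchy-type estimates guarantee.
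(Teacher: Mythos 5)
Your overall strategy is the same as the paper's: enumerate a countable family of open discs with closures in $\overline\Delta\setminus K$, run Lemma~\ref{MK_Lemma} inside each one with a stage-dependent radius budget, bound derivatives on segments $I\subseteq K$ via Lemma~\ref{Cauchy_Estimate}, and get the regularity points off $K$ from the McKissick functions together with Lemma~\ref{Isolated_Point_Argument}. However, your resolution of what you correctly identify as the main obstacle contains a genuine gap. You claim that every deleted disc can be kept disjoint from a \emph{fixed} open neighbourhood of $\overline K$, yielding a uniform lower bound $s_j\ge d>0$ for the distances from all deleted discs to a segment $I\subseteq K$. This is incompatible with the rest of your construction: to make points of $X\setminus K$ lying arbitrarily close to $K$ into points of continuity, your treated discs $D$ (whose union must cover $\Delta\setminus K$, as in your step (1)) must come arbitrarily close to $K$, and Lemma~\ref{MK_Lemma} gives no control over \emph{where} inside $D$ the deleted discs $\Delta_k$ sit, so the deleted discs also approach $K$ and no uniform $d$ exists. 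Conversely, if you did protect a fixed neighbourhood $V$ of $K$ from deletions, then $V\cap\overline\Delta\subseteq X$, so $X$ would have interior points off $K$; for $w,y$ in the same component of $\tint X$, any $f\in J_y$ vanishes on an open set and hence identically on that component (elements of $R(X)$ are holomorphic on $\tint X$), so $J_y\subseteq M_w$ and such $w$ would fail to be points of continuity. Thus the estimate $\sum_j r_j/s_j^{k+1}\le d^{-(k+1)}\sum_j r_j$, on which your $C^k k!$ bound rests, is not available.

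The repair requires exactly the bookkeeping you gesture at but do not carry out, and it must be done per stage against a \emph{decaying} distance: with $d_n=\dist(B_n,K)$ (which tends to $0$), choose $\varepsilon_n$ small relative to \emph{every} power of $d_n$ simultaneously, e.g.\ $\varepsilon_n\le d_n^{k+1}(\log(k+3))^k/2^n$ for all $k$ (possible, since for fixed $n$ the right-hand side is bounded away from $0$ in $k$). Then $\sum_n\varepsilon_n/d_n^{k+1}\le(\log(k+3))^k$, and since each deleted disc at stage $n$ has $s_{n,m}\ge d_n$, Lemma~\ref{Cauchy_Estimate} gives $\abs{f^{(k)}}_I\le k!\bigl(d_0^{-(k+1)}+(\log(k+3))^k\bigr)\abs{f}_X$, whose $k$-th root is $O(k\log(k+3))$, so \eqref{DC_Condition} follows from the divergence of $\sum_k 1/(k\log(k+3))$. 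Note also a secondary slip in your step (6): a bound of the shape $M^k(k!)^2$ would \emph{not} suffice, since $(k!)^{2/k}\asymp k^2$ makes $\sum_k\abs{f^{(k)}}_I^{-1/k}$ comparable to $\sum_k k^{-2}<\infty$; you need the $k$-th root of your derivative bound to grow essentially no faster than $k\log k$, which is precisely what the stagewise choice above delivers and what the uniform-distance shortcut cannot.
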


\begin{proof}
Let $\mathcal B$ be the set of all open discs $B$ in the plane such that the centre of $B$ is in $\Q+\Q i$, the radius of $B$ is in
$\Q \cap (0,\infty)$, and $\overline B \subseteq \overline \Delta \setminus K$. Let $(B_n)$ be a sequence enumerating the countable set $\mathcal B$. For each $n\in \N$, let $d_n$ be the distance from $B_n$ to $K$, and choose $\varepsilon_n>0$ small enough so that the inequality
$\varepsilon_n \leq d_n^{k+1}(\log(k +3))^k/2^n$
holds for all $k\in \N$.
Note that, for all $k \in \N$, we have
\begin{equation}
\label{epsilon_bound}
\sum_{n=1}^\infty \frac{\varepsilon_n}{d_n^{k+1}} \leq (\log(k +3))^k\,.
\end{equation}

Now we apply Lemma \ref{MK_Lemma} to each $B_n$, with $\varepsilon$ in the lemma replaced by $\varepsilon_n$. We denote the sequence of open discs obtained from the lemma by $(D_{n,m})_{m\in \N}$, and we denote the radius of $D_{n,m}$ by $r_{n,m}$. Note that $\sum_{m=1}^\infty r_{n,m} <\varepsilon_n$ for each $n\in \N$. Set
\[
X=\overline{\Delta}\setminus \bigcup_{n,m\in \N}D_{n,m}\,.
\]

Let $z\in K$, and let $f \in R_0(X)$. For $n,m \in \N$, let $s_{n,m}$ be the distance from $z$ to $D_{n,m}$. Note that $s_{n,m} \geq d_n$, since $D_{n,m} \subseteq B_n$. Let $d_0$ be the distance from $K$ to the unit circle.
By Lemma~\ref{Cauchy_Estimate}, we have
\begin{eqnarray}
\abs{f^{(k)}(z)} &\leq& k!\left(\frac 1{(1-\abs{z})^{k+1}} + \sum_{n=1}^\infty \sum_{m=1}^\infty \frac{r_{n,m}}{s_{n,m}^{k+1}}\right)\abs{f}_X \nonumber \\
&\leq& k!\left(\frac1{d_0^{k+1}}+\sum_{n=1}^\infty \sum_{m=1}^\infty \frac{r_{n,m}}{d_n^{k+1}}\right)\abs{f}_X \nonumber \\
&\leq& k!\left(\frac1{d_0^{k+1}}+\sum_{n=1}^\infty \frac{\varepsilon_n}{d_n^{k+1}} \right)\abs{f}_X \nonumber \\
&\leq& k!\left(\frac1{d_0^{k+1}}+(\log(k+3))^k\right)\abs{f}_X\,, \label{Derivative_Estimate}
\end{eqnarray}
by (\ref{epsilon_bound}).
Let $I$ be a closed line segment in $K$, and let $f\in R(X)$. Then it follows from inequality \eqref{Derivative_Estimate} that every $f\in R(X)$ is differentiable on $I$, and satisfies
\[\abs{f^{(k)}}_I\leq k!\left( \frac 1{d_0^{k+1}}+((\log(k+3))^k \right)\abs{f}_X. \]
Now we pick $N\in \N$ large enough such that $(\log(k+3))^k \geq 1/d_0^{k+1}$ for all $k\geq N$. Then we have
\begin{eqnarray}
\sum_{k=1}^\infty \frac 1{\abs{f^{(k)}}_I^{1/k}} &\geq& \sum_{k=N}^\infty \frac 1{\abs{f^{(k)}}_I^{1/k}} \nonumber \\
&\geq& \sum_{k=N}^\infty \frac 1{(2\abs{f}_X)^{1/k}(k!)^{1/k}\log(k+3)}\nonumber \\ &\geq& \sum_{k=N}^\infty \frac 1{(2\abs{f}_X)^{1/k}k\log(k+3)}=\infty. \nonumber
\end{eqnarray} Thus $f$ satisfies condition \eqref{DC_Condition} on $I$.

Let $z\in X\setminus K$. We show that $z$ is a point of continuity for $R(X)$. Let $w\in X$ with $w\neq z$. Then we can find some $B_n$ with $z\in B_n$ and $w\in X\setminus \bar{B}_n$. By Lemma~\ref{MK_Lemma}, we can find $f\in R(X)$ with $f(z)\neq 0$ and such that $f$ vanishes on the whole of $X\setminus\bar{B}_n$. Then $f \in J_w \setminus M_z$, and so $J_w \nsubseteq M_z$. Thus $z$ is a point of continuity for $R(X)$.

Since every point of $X \setminus K$ is a point of continuity for $R(X)$, by Lemma~\ref{Isolated_Point_Argument} all of these points are also R-points for $R(X)$.
\end{proof}

Next we give a corollary to show that, for suitable $K$, we can eliminate all the non-trivial Jensen measures for $R(X)$, where $X$ is as constructed in the above theorem.

\begin{corollary}\label{Main_Corollary}
Let $K$ be a compact subset of $\Delta$. Suppose that $\tint K = \emptyset$, $\C \setminus K$ is connected, and $K$ contains at least one closed line segment. Let $X$ be the Swiss cheese set constructed in Theorem \ref{JY_Theorem}, and let $I$ be a line segment contained in $K$. Then $R(X)$ has no non-trivial Jensen measures, but no point of $I$ is an R-point or a point of continuity for $R(X)$, and so $R(X)$ is not regular.
\end{corollary}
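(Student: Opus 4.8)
The plan is to treat the two assertions separately. The second (no non-trivial Jensen measures) will be a fairly formal consequence of Theorem~\ref{JY_Theorem}, Lemma~\ref{Jensen_measures} and Lemma~\ref{lemma_4_1}, once one checks that the support of any Jensen measure is forced to sit inside $K$; the first (points of $I$ are neither R-points nor points of continuity) will come from combining the differentiability/growth information in Theorem~\ref{JY_Theorem} with the Denjoy--Carleman lemma.

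First I would handle $I$. By Theorem~\ref{JY_Theorem} every $f\in R(X)$ is infinitely differentiable on $I$ and satisfies condition~\eqref{DC_Condition} on $I$. Fix $x\in I$ and $f\in J_x$. Since $x$ lies in the interior (in $X$) of the zero set of $f$ and $I\subseteq X$, the function $f$ vanishes on a non-degenerate subsegment of $I$ containing $x$; regarding $f|_I$ as a $C^\infty$ function on $I$, this gives $f^{(k)}(x)=0$ for all $k\geq 0$. Lemma~\ref{DC_Lemma} then forces $f$ to vanish identically on $I$. Consequently $I\subseteq h(J_x)$, so $h(J_x)\neq\{x\}$ and $x$ is not an R-point; and, picking $y\in I\setminus\{x\}$ (possible since all line segments here are non-degenerate), the same argument applied to any $f\in J_y$ gives $f(x)=0$, so $J_y\subseteq M_x$ and $x$ is not a point of continuity. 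Since no point of $I$ is a point of continuity, $R(X)$ is not regular.

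Next I would rule out non-trivial Jensen measures by contradiction. Suppose some $x\in X$ has a non-trivial Jensen measure $\mu$, and let $F$ be its closed support (so $x\in F$, as always for representing measures). By Lemma~\ref{Jensen_measures}, $x$ is not a point of continuity for $R(X)$ and no point of $F\setminus\{x\}$ is an R-point. But Theorem~\ref{JY_Theorem} says every point of $X\setminus K$ is both; hence $x\in K$ and $F\setminus\{x\}\subseteq K$, so $F\subseteq K$. In particular $\tint F=\emptyset$. Moreover $\C\setminus F\supseteq\C\setminus K$, and $\C\setminus K$ is connected and unbounded, so any bounded component $V$ of $\C\setminus F$ would be disjoint from $\C\setminus K$, hence an open subset of $K$, hence empty; thus $\C\setminus F$ is connected, and in particular no bounded component of $\C\setminus F$ is contained in $X$. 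Lemma~\ref{lemma_4_1} now applies and gives $R(F)\neq C(F)$. On the other hand, $\tint F=\emptyset$ and $\C\setminus F$ is connected, so Lavrentiev's theorem gives $R(F)=C(F)$ --- a contradiction. (One could equally bypass Lemma~\ref{lemma_4_1}: polynomials restrict to elements of $R(X)$ and, by Lavrentiev, are uniformly dense in $C(F)$, so the representing property of $\mu$ forces $\mu=\delta_x$.) Hence $R(X)$ has no non-trivial Jensen measures, and by the previous paragraph it is not regular.

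I do not anticipate a serious obstacle: the corollary is a clean assembly of results already established. The two points needing a little care are (i) extracting "$f$ vanishes to infinite order at $x$ along $I$'' from "$f\in J_x$'', which relies crucially on the fact --- supplied by Theorem~\ref{JY_Theorem} --- that elements of $R(X)$ are genuinely $C^\infty$ on $I$ (not merely continuous), so that Lemma~\ref{DC_Lemma} can be applied; and (ii) the small topological observation that $\tint K=\emptyset$ together with connectedness of $\C\setminus K$ propagates to connectedness of $\C\setminus F$ for the support $F$, which is exactly what makes the hypothesis of Lemma~\ref{lemma_4_1} hold (vacuously) and lets Lavrentiev's theorem close the loop.
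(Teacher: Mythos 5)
Your proof is correct in substance and uses exactly the paper's ingredients, but it assembles the Jensen-measure part in mirror image. The treatment of $I$ is the same as the paper's (Theorem~\ref{JY_Theorem} plus Lemma~\ref{DC_Lemma}: any $f\in J_x$ vanishes identically on $I$, so $I\subseteq h(J_x)$ and $J_y\subseteq M_x$ for $y\in I\setminus\{x\}$). For the Jensen measures, the paper argues the other way round: it applies Lemma~\ref{lemma_4_1} with the closed set taken to be $K$ itself (where the hypothesis on bounded complementary components is immediate from connectedness of $\C\setminus K$, and $R(K)=C(K)$ by Lavrentiev/Mergelyan) to conclude that the support $F$ cannot lie inside $K$, and then gets a contradiction from Lemma~\ref{Jensen_measures} because a point of $F\setminus K$ is an R-point. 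You instead use Lemma~\ref{Jensen_measures} first to trap $F$ inside $K$, and then apply Lemma~\ref{lemma_4_1} to $F$ itself together with Lavrentiev; this forces you to supply the extra (correct) topological observation that $\C\setminus F$ inherits connectedness from $\C\setminus K$ because $\tint K=\emptyset$, a step the paper's ordering avoids. Both routes are equally valid; the paper's is marginally shorter, yours makes the role of the support more explicit and your polynomial-density aside shows Lemma~\ref{lemma_4_1} is not really needed at this point.

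One small slip: the parenthetical ``$x\in F$, as always for representing measures'' is false in general (for the disc algebra, normalized arc length on the circle represents the origin), and Lemma~\ref{lemma_4_1} as stated does require $x\in F$. This is harmless here: either replace $F$ by the closed set $F\cup\{x\}\subseteq K$, which satisfies the same hypotheses, or note that $\abs{p(x)}\leq\abs{p}_F$ for all polynomials $p$, so $x$ lies in the polynomial hull of $F$, which equals $F$ since $\C\setminus F$ is connected. With that repair the argument is complete.
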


\begin{proof}
Let $I$ be a closed line segment in $K$. By Lemma~\ref{DC_Lemma}, if $f\in R(X)$ vanishes on a neighbourhood of a point $x$ in $I$, then $f$ vanishes on $I$. Thus no point of $I$ can be an R-point or a point of continuity.

Certainly there are no non-trivial Jensen measures for points in $X\setminus K$. It remains to show that for each $z\in K$ there are no non-trivial Jensen measures for $z$. Recall that every point of $X \setminus K$ is an R-point for $R(X)$.

Let $z\in K$. Suppose, for contradiction, that $\mu$ is a non-trivial Jensen measure for $z$ with closed support $F$. By Mergelyan's theorem, $R(K)=C(K)$, and so, by Lemma~\ref{lemma_4_1}, $F$ is not contained in $K$. Let $w\in F\setminus K$. Then $w$ is an R-point for $R(X)$. However, this contradicts Lemma~\ref{Jensen_measures}. The result follows.
\end{proof}

We remark that \cite[Corollary~6]{2001Feinstein} is now the special case of Corollary~\ref{Main_Corollary} where we take $K$ to be the closed line segment $[-1/2,1/2]$. We also remark that an alternative approach to the last part of our proof is to note that the Jensen interior of $K$ must be empty, and then use the approach from \cite{2001Feinstein}. Our approach appears to be more elementary, although Lemma~\ref{Isolated_Point_Argument} uses \cite[Theorem~3.2]{FeinsteinSomerset2000}, and that result uses the Shilov Idempotent Theorem. However, for compact plane sets $X$ with empty interior, the special case of \cite[Theorem~3.2]{FeinsteinSomerset2000} for $R(X)$ can be proved without appealing to the Shilov idempotent theorem.

\medskip

Finally we give a concrete example where $K$ has positive area and such that, for the Swiss cheese set $X$ constructed in Theorem~\ref{JY_Theorem}, the set of points of continuity for $R(X)$ is precisely $X\setminus K$, and this is also equal to the set of R-points for $R(X)$.
For this purpose, we use \emph{fat Cantor} sets. A fat Cantor set, also known as a \emph{Smith-Volterra-Cantor} set, is a compact subset of $\R$ that is nowhere dense and has positive length. The Cartesian product of a fat Cantor set and a closed interval can be regarded as a compact subset $K$ of $\C$ with positive area such that $\C \setminus K$ is connected and $K$ has no interior points. Moreover, $K$ is the union of the closed line segments which are contained in $K$.

The following Theorem is now an immediate consequence of Theorem~\ref{JY_Theorem} and Corollary~\ref{Main_Corollary}.

\begin{theorem}
Let $F\subseteq [-1/2,1/2]$ be a fat Cantor set of positive length, and set
\[ K=\{ z\in \C: {\rm Re}(z)\in F, {\rm Im}(z)\in [-1/2,1/2] \}. \]
Then there is a Swiss cheese set $X$ with $K\subseteq X$ such that every point of $X\setminus K$ is a point of continuity and an R-point for $R(X)$, every point in $K$ is a non R-point and a point of discontinuity, and $R(X)$ has no non-trivial Jensen measures.
\end{theorem}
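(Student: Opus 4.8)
The plan is to verify that the compact set $K = F \times [-1/2,1/2]$ (identified with a subset of $\C$ in the obvious way) satisfies all the hypotheses required to invoke Theorem~\ref{JY_Theorem} and Corollary~\ref{Main_Corollary}, and then to read off the conclusion. First I would check that $K$ is a compact subset of $\Delta$: since $F \subseteq [-1/2,1/2]$ and the imaginary part lies in $[-1/2,1/2]$, every point $z \in K$ has $\abs{z} \leq \sqrt{1/4 + 1/4} = 1/\sqrt{2} < 1$, so $K \subseteq \Delta$; compactness is immediate as $K$ is the product of two compact subsets of $\R$. Next, $\tint K = \emptyset$: a fat Cantor set $F$ is nowhere dense in $\R$, so $F$ contains no open interval, hence $K$ contains no open rectangle and therefore no open disc, giving empty interior. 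Third, $\C \setminus K$ is connected: for each fixed $y$ with $\abs{y} \leq 1/2$, the horizontal slice $\{x + iy : x \notin F\}$ is open and dense in that line since $F$ is nowhere dense, so one can join any point of $\C \setminus K$ to, say, a point with large real part while moving only horizontally through complement points and then freely in the far region — more carefully, $\C \setminus K \supseteq \C \setminus ([-1/2,1/2]\times[-1/2,1/2])$, which is connected, and every point of $([-1/2,1/2]\times[-1/2,1/2]) \setminus K$ lies on a horizontal segment that meets $\C \setminus ([-1/2,1/2]^2)$ (because $F \neq [-1/2,1/2]$, indeed $F$ misses a dense open set of real parts), so the whole complement is connected. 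Finally, $K$ contains closed line segments — for each $a \in F$ the segment $\{a\} \times [-1/2,1/2]$ lies in $K$ — and in fact $K$ is exactly the union of these vertical segments, so every point of $K$ lies on a closed line segment contained in $K$.

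With these verifications in hand, the argument is a direct assembly. By Theorem~\ref{JY_Theorem} applied to this $K$, there is a Swiss cheese set $X \supseteq K$ such that every point of $X \setminus K$ is both a point of continuity and an R-point for $R(X)$, and such that every $f \in R(X)$ is infinitely differentiable on each closed line segment contained in $K$ and satisfies condition~\eqref{DC_Condition} there. Since $K$ satisfies the hypotheses of Corollary~\ref{Main_Corollary} ($\tint K = \emptyset$, $\C \setminus K$ connected, $K$ contains a closed line segment), we conclude that $R(X)$ has no non-trivial Jensen measures. For the statement that \emph{every} point of $K$ is a non-R-point and a point of discontinuity: given $z \in K$, by the last paragraph $z$ lies on some closed line segment $I \subseteq K$; by Lemma~\ref{DC_Lemma} together with the condition~\eqref{DC_Condition} guaranteed by Theorem~\ref{JY_Theorem}, any $f \in R(X)$ that vanishes on a neighbourhood of a point of $I$ must vanish identically on $I$, so (as in the proof of Corollary~\ref{Main_Corollary}) no point of $I$ — in particular not $z$ — can be an R-point or a point of continuity. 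This exhausts $X = (X \setminus K) \cup K$, giving the stated dichotomy, and since $K$ has positive area the set of points of discontinuity has positive area.

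The only genuinely non-routine point is the connectedness of $\C \setminus K$, and I expect that to be the main (very mild) obstacle; it hinges on the fat Cantor set being nowhere dense rather than merely having positive measure, which is exactly why a fat Cantor set — not, say, a positive-measure set that happens to contain an interval — is the right building block. Everything else is bookkeeping: the containment $K \subseteq \Delta$, compactness, emptiness of the interior, and the "$K$ is a union of the line segments it contains" observation are all elementary consequences of the product structure $K = F \times [-1/2,1/2]$ with $F$ a nowhere dense compact subset of $[-1/2,1/2]$ of positive length. I would therefore keep the proof short, essentially: "The set $K$ is a compact subset of $\Delta$ with empty interior and connected complement, and is the union of the vertical segments $\{a\}\times[-1/2,1/2]$ for $a \in F$. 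The result now follows by applying Theorem~\ref{JY_Theorem} and Corollary~\ref{Main_Corollary}, noting that $K$ has positive area."
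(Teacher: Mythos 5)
Your proposal is correct and is essentially the paper's own argument: the paper likewise notes that $K$ is a compact subset of $\Delta$ with empty interior, connected complement, positive area, and equal to the union of the closed (vertical) line segments it contains, and then deduces the theorem as an immediate consequence of Theorem~\ref{JY_Theorem} and Corollary~\ref{Main_Corollary}. Your verifications of these properties and the observation that every $z\in K$ lies on such a segment (so Corollary~\ref{Main_Corollary} applies to it) are exactly the bookkeeping the paper leaves implicit.
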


We end this note by asking whether Lemma~\ref{Isolated_Point_Argument} remains true if we interchange \emph{R-point} with \emph{point of continuity}.

\begin{question}
\label{Rquestion}
Let $X$ be a compact Hausdorff space, let $A$ be a natural Banach function algebra on $X$, and let $x\in X$. Suppose that there is an open neighbourhood $U$ of $x$ such that each point in $U\setminus \{x\}$ is an R-point for $A$. Is $x$ necessarily a point of continuity for $A$?
\end{question}

It is an easy consequence of \cite[Proposition~4.1]{FeinsteinSomerset2000} that question \ref{Rquestion} has a positive answer if the algebra $A$ is local (or even 2-local). In particular, the answer is positive for $R(X)$.

\bibliographystyle{amsplain}





\end{document}